\documentclass[11pt]{amsart}

\usepackage[T1]{fontenc}
\usepackage[margin=1.12in]{geometry}
\usepackage{amsmath,amssymb,amsthm}
\usepackage{booktabs,array}
\usepackage{xcolor}
\usepackage{pgfplots}
\pgfplotsset{compat=1.18}
\usepackage{enumitem}
\usepackage{microtype}
\usepackage{tikz-cd}
\usepackage[hidelinks]{hyperref}

\newtheorem{theorem}{Theorem}[section]
\newtheorem{proposition}[theorem]{Proposition}
\newtheorem{lemma}[theorem]{Lemma}
\newtheorem{corollary}[theorem]{Corollary}
\theoremstyle{definition}
\newtheorem{problem}[theorem]{Problem}
\newtheorem{conjecture}[theorem]{Conjecture}
\theoremstyle{remark}
\newtheorem{remark}[theorem]{Remark}

\newcommand{\R}{\mathbb{R}}
\newcommand{\Z}{\mathbb{Z}}
\newcommand{\conv}{\operatorname{conv}}
\newcommand{\aff}{\operatorname{aff}}

\newcommand{\Vol}{\operatorname{Vol}}
\newcommand{\initial}{\operatorname{in}}
\newcommand{\cU}{\mathcal{U}}
\newcommand{\cF}{\mathcal{F}}
\newcommand{\cQ}{\mathcal{Q}}
\newcommand{\cN}{\mathcal{N}}
\newcommand{\cB}{\mathcal{B}}
\newcommand{\cC}{\mathcal{C}}
\newcommand{\cR}{\mathcal{R}}

\newcommand{\cH}{\mathcal{H}}
\newcommand{\cI}{\mathcal{I}}
\setlist[enumerate]{leftmargin=2.25em,itemsep=2pt,topsep=4pt}
\hypersetup{
  pdftitle={Most (0,1)-polytopes are not normal},
  pdfauthor={Santiago Morales},
  pdfsubject={Unimodular triangulations of (0,1)-polytopes},
  pdfkeywords={(0,1)-polytopes, normal polytopes, unimodular triangulations, flag triangulations, toric ideals, Koszul algebras, delta-matroids, random polytopes}
}

\title[Most $(0,1)$-polytopes are not normal]
{Most $(0,1)$-polytopes are not normal}

\author{Santiago Morales}
\address{Department of Mathematics, University of California, Davis}
\email{moralesduarte@ucdavis.edu}
\subjclass[2020]{Primary 52B20; Secondary 13P10, 16S37, 60C05}
\keywords{$(0,1)$-polytopes, normal polytopes, unimodular triangulations, flag triangulations, toric ideals, Koszul algebras, delta-matroids, random polytopes}

\begin{document}
\raggedbottom

\begin{abstract}
{We prove that the proportion of $0/1$-equivalence classes of $d$-dimensional $(0,1)$-polytopes that are normal tends to zero at least at a double exponential rate as $d\to\infty$. As a consequence, the same holds for any of the following classes given by the type of triangulation possible: (a) quadratic, (b) flag unimodular, (c) regular unimodular, or (d) unimodular, among others. 

We classify the $0/1$-equivalence classes of $d$-dimensional $(0,1)$-polytopes for $d\leq5$ according to whether they admit a unimodular, flag unimodular, or quadratic triangulation. In dimension five, exactly $175$ out of $1{,}226{,}525$ classes have a flag unimodular triangulation, but no quadratic triangulation. Among them, there are polytopes whose toric rings are not Koszul; thus, we find the first polytopes that have a flag unimodular triangulation, but whose toric ring is not Koszul. In contrast with the matroid case, we exhibit a delta-matroid polytope that is not normal.}
\end{abstract}

\maketitle

\section{Introduction}\label{sec:introduction}

Unimodular triangulations have nice properties and a lot is known about when they exist \cite{HPPS21}. Let $P\subset\R^d$ be a lattice polytope. A triangulation of $P$ is called \emph{unimodular} if every maximal simplex has normalized volume one, and it is called \emph{flag} if every minimal nonface has two vertices.  A triangulation is called \emph{quadratic} if it is regular, unimodular, and flag. We call $P$ \emph{normal} if its associated toric ring $K[P]$ is a normal domain.

Writing $A=P\cap\Z^d$ and $\widetilde A=\{(1,a):a\in A\}$, we use the following hierarchy of triangulation and covering properties found in \cite[Section~1.2.5]{HPPS21}:

\begin{enumerate}[label=(\roman*)]
\item[($\cQ$)] $P$ has a quadratic triangulation;
\item[($\cF\cU$)] $P$ has a flag unimodular triangulation;
\item[($\cR\cU$)] $P$ has a regular unimodular triangulation;
\item[($\cU$)] $P$ has a unimodular triangulation;
\item[($\cB\cC$)] $P$ has a unimodular binary cover, that is, a cycle of
unimodular simplices generating $H_d(P,\partial P;\mathbb Z_2)$;
\item[($\cC$)] $P$ has a unimodular cover, that is, $P$ is the union of
unimodular simplices contained in $P$;
\item[($\cH$)] $P$ has the integer decomposition property and $\mathbb R_{\geq0}\widetilde A$ has a free Hilbert
cover, meaning that every point of $\mathbb R_{\geq0}\widetilde A\cap\mathbb Z^{d+1}$ is a nonnegative integral combination of linearly independent elements of
$\widetilde A$;
\item[($\cI$)] $P$ has the \emph{integer decomposition property} (IDP), meaning that, for every $k\geq1$, every point of $kP\cap\Z^d$ is a sum of $k$ points of $P\cap\Z^d$;
\item[($\cN$)] $P$ is normal.

\end{enumerate}
where
\[
\begin{tikzcd}[column sep=1.25em, row sep=small]
  & (\cF\cU) \arrow[dr, Rightarrow] \\
(\cQ) \arrow[ur, Rightarrow] \arrow[dr, Rightarrow]
  & & (\cU) \arrow[r, Rightarrow]
  & (\cB\cC) \arrow[r, Rightarrow]
  & (\cC) \arrow[r, Rightarrow]
  & (\cH) \arrow[r, Rightarrow]
  & (\cI) \arrow[r, Rightarrow]
  & (\cN). \\
  & (\cR\cU) \arrow[ur, Rightarrow]
\end{tikzcd}
\]

The relation between properties $(\cI)$ and $(\cN)$ is stated precisely in Lemma~\ref{lem:normal-idp}.

$(0,1)$-polytopes form a central class in polyhedral combinatorics, and their low-dimensional $0/1$-equivalence classes can be enumerated explicitly \cite{Zie00,Aic00,DLRS10}. The purpose of this paper is to classify the $0/1$-equivalence classes of $(0,1)$-polytopes in small dimensions according to properties $(\cQ)$, $(\cF\cU)$, and $(\cU)$, and to show that all of the properties in the hierarchy occur rarely among $0/1$-equivalence classes. We give a simple proof that the proportion of normal $0/1$-equivalence classes of $d$-dimensional $(0,1)$-polytopes tends to zero at a double exponential rate as $d$ grows. Therefore, the same holds for every property in the above hierarchy.

On the algebraic side, we consider a second, parallel hierarchy, applied to the toric ideals of $(0,1)$-polytopes. Let $S=K[x_1,\ldots,x_n]$ and let $I\subset S$ be a homogeneous ideal without linear forms. Then
\begin{equation}\label{eq:algebra-hierarchy}
 I\text{ has a quadratic Gr\"obner basis}
 \;\Longrightarrow\;
 S/I\text{ is Koszul}
 \;\Longrightarrow\;
 I\text{ is generated by quadrics}.
\end{equation}
The first implication is due to Anick \cite{Ani86}; see also \cite[Section~4]{Fro99} and \cite[Corollary~2.1.3]{BGT97}.  The second implication is standard. For a simplicial complex $\Delta$, Fr\"oberg's theorem says that its
Stanley--Reisner ring $K[\Delta]$ is Koszul if and only if $\Delta$ is flag \cite{Fro99}. 

The two parallel hierarchies are linked by Sturmfels' correspondence \cite{Stu96}: $P$ has a quadratic triangulation if and only if $I_P$ has a squarefree quadratic Gr\"obner basis. By Fr\"oberg's theorem, a quadratic triangulation forces $S/I_P$ to be Koszul.

Ohsugi and Hibi first constructed a normal $(0,1)$-polytope without regular unimodular triangulations \cite{OH99normal}, that is, whose toric ideal has no squarefree initial ideal. Ohsugi later constructed an infinite family with the same property \cite{Ohs02}. 

Backman and Liu proved that every matroid base polytope, a class of $(0,1)$-polytopes, has a regular unimodular triangulation \cite{BL25}. More recently,  it was shown that the base polytope of the Fano matroid has \textit{flag unimodular triangulations} but \textit{no quadratic triangulation}.  Thus, its toric ideal has \textit{no quadratic Gr\"obner basis}. This result was obtained by De Loera, Ferroni, Morales, and Rambau \cite{DLFMR26} and, independently, by Backman, Cheung, Laso\'n, Liu, and Micha{\l}ek \cite{BCLLM26}. It answered in the negative Herzog and Hibi's question whether the toric ideal of every (poly)matroid base polytope admits a quadratic Gr\"obner basis, a Gr\"obner-basis version of White's conjecture.

Herzog and Hibi also posed the question

\begin{problem}[Herzog--Hibi]\label{prob:matroid-koszul}
    \cite[p.~241]{HH02} Is every polymatroid base ring Koszul?
\end{problem}

De Loera, Ferroni, Morales, and Rambau found no matroid without a flag unimodular triangulation in their computations on ground sets of size eight and proposed the following conjecture.

\begin{conjecture}[De Loera--Ferroni--Morales--Rambau]\label{conj:polymatroid}
Every polymatroid base polytope has a flag unimodular triangulation
\cite[Conjecture~6.2]{DLFMR26}.
\end{conjecture}

The two questions are related, but we show here that they are not equivalent. Fr\"oberg's theorem applies to the Stanley--Reisner ring of a triangulation, and Sturmfels' correspondence identifies the Stanley--Reisner ideal of a \textit{regular} unimodular triangulation with an initial ideal of the toric ideal; a \textit{nonregular} triangulation gives no such identification.  The examples below show that this distinction is essential. 

{Following Ziegler \cite{Zie00}, two $d$-dimensional $(0,1)$-polytopes in $\R^d$ are $0/1$-equivalent if a symmetry of the $d$-cube maps one to the other. We enumerate all $0/1$-equivalence classes for $d\leq5$ using methods developed by Aichholzer \cite{Aic00}. We then classify all of the enumerated polytopes according to the $0/1$-equivalence-invariant properties $\cU$, $\cF\cU$, and $\cQ$ using the SAT-solver methods of \cite{DLFMR26}.}

\begin{theorem}[Census of small $(0,1)$-polytopes]\label{thm:main}
{For every $d\leq5$, we classify all $0/1$-equivalence classes of $d$-dimensional $(0,1)$-polytopes according to whether they admit a unimodular triangulation, a flag unimodular triangulation, or a quadratic triangulation; see Table~\ref{tab:census}. In particular:}
\begin{enumerate}[label=(\roman*)]
\item {For $d\leq4$, every $d$-dimensional $(0,1)$-polytope that has a flag unimodular triangulation also has a quadratic triangulation.}
\item {In dimension five, exactly $175$ $0/1$-equivalence classes of $5$-dimensional $(0,1)$-polytopes have a flag unimodular triangulation but no quadratic triangulation.}
\end{enumerate}
\end{theorem}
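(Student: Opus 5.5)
The statement is a computational census, so the proof is a verified enumeration followed by a per-class decision procedure. I will first enumerate representatives of all $0/1$-equivalence classes of full-dimensional $(0,1)$-polytopes in $\R^d$ for $d\le 5$, following Aichholzer's method \cite{Aic00}. The enumeration proceeds by increasing number of vertices: every $0/1$-polytope with $k+1$ vertices arises from one with $k$ vertices by adding a cube vertex. For each candidate vertex set I compute a canonical form under the hyperoctahedral group $B_d$ of order $2^d d!$, which is $3840$ for $d=5$. I keep one representative per orbit and discard those that are not full-dimensional. Because a $(0,1)$-polytope is determined by its vertex set, which equals its lattice point set, equivalence classes of polytopes coincide with $B_d$-orbits of affinely spanning subsets of $\{0,1\}^d$. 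As a sanity check, the counts will be compared with the known totals from \cite{Zie00,Aic00}, for instance $1{,}226{,}525$ classes for $d=5$.

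Second, for each representative $P$ I will decide properties $(\cU)$, $(\cF\cU)$ and $(\cQ)$ using the SAT encodings of \cite{DLFMR26}.
\begin{itemize}
\item[$(\cU)$] The variables are the unimodular simplices spanned by vertices of $P$. The clauses say that every generic point, or equivalently every interior codimension-one unimodular facet, is covered exactly once, and that intersecting simplices are incompatible. This yields a unimodular triangulation exactly when one exists.
\item[$(\cF\cU)$] I add variables for edges, together with clauses forcing every clique in the edge graph to be a face. This expresses that the triangulation is flag, i.e.\ determined by its edges.
\item[$(\cQ)$] Here I use Sturmfels' correspondence \cite{Stu96}: it suffices to test whether $I_P$ has a squarefree quadratic initial ideal. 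Concretely, for each flag unimodular triangulation found I check regularity by a linear program for a height function. If no regular one exists, I block the triangulation with a clause and resume the search until the SAT instance becomes unsatisfiable.
\end{itemize}
Every positive answer comes with a certificate that can be checked independently: a triangulation, a flag check, and a height vector. Every negative answer comes with an unsatisfiability proof, for example a DRAT certificate.

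Third, I will reduce the workload using the hierarchy. Any non-normal $P$ fails all three properties, and normality can be checked by a Hilbert basis computation. Any $P$ admitting a regular flag unimodular triangulation, found for instance via a pulling/reverse-lexicographic order, already satisfies all three. Only the remaining classes go to the full SAT/LP loop. Tabulating the results gives Table~\ref{tab:census}. Item~(i) is then read off for $d\le4$ as the equality of the $(\cF\cU)$ and $(\cQ)$ columns, and item~(ii) as a difference of exactly $175$ in dimension five.

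The main obstacle I expect is the $(\cQ)$ step for the $5$-dimensional classes that are flag unimodular but for which no quadratic triangulation is found quickly. Proving that none of possibly very many flag unimodular triangulations is regular cannot be done by blind enumeration. There I would first try to enumerate regular triangulations directly via the secondary fan, with TOPCOM-style flips restricted to unimodular ones, and test flagness on each. This is feasible because such polytopes have at most $32$ vertices. If that is too slow, I would encode regularity lazily, adding Farkas-type cuts that exclude whole families of nonregular triangulations sharing a nonregular circuit configuration. The two methods would also be cross-checked on the $175$ exceptional classes.
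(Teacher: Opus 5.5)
Your plan is essentially the paper's own: Aichholzer-style growth of canonical $G_d$-representatives, the SAT encodings of \cite{DLFMR26}, and deciding $(\cQ)$ by an \textsc{AllSAT} loop with blocking clauses and an LP regularity test. The blind enumeration you worry about turns out to be feasible, since the $175$ classes in $\cF\cU\setminus\cQ$ have only $11{,}529$ flag unimodular triangulations in total, all nonregular. One difference is that the paper checks completeness of the enumeration at every vertex count $m$ by Burnside's lemma over all $m$-element subsets, rather than relying only on the published totals; this also requires keeping non-full-dimensional orbits during the growth step and filtering them out only at the end, whereas your wording suggests discarding them as you go.
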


\begin{table}[ht]
\centering
\small
\begin{tabular}{@{}crrrrr@{}}
\toprule
$d$ & all classes & $\cQ$ & $\cF\cU\setminus\cQ$ & $\cU\setminus\cF\cU$ & not $\cU$\\
\midrule
$1$ & $1$ & $1$ & $0$ & $0$ & $0$\\
$2$ & $2$ & $2$ & $0$ & $0$ & $0$\\
$3$ & $12$ & $10$ & $0$ & $1$ & $1$\\
$4$ & $347$ & $212$ & $0$ & $91$ & $44$\\
$5$ & $1{,}226{,}525$ & $153{,}791$ & \textbf{175} & $661{,}337$ & $411{,}222$\\
\bottomrule
\end{tabular}
\caption{{$0/1$-equivalence classes of $d$-dimensional $(0,1)$-polytopes according to properties $\cU$: existence of a unimodular triangulation; $\cF\cU$: existence of a flag unimodular triangulation; and $\cQ$: existence of a quadratic triangulation.}}
\label{tab:census}
\end{table}

Within the family $\cF\cU\setminus\cQ$, we find polytopes whose toric rings are not Koszul. We also enumerate all flag unimodular triangulations of the polytopes in $\cF\cU\setminus\cQ$. All of these triangulations are non-regular; see Figure~\ref{fig:flagcensus}.

\begin{proposition}\label{prop:cubic-example}
There exists a $(0,1)$-polytope with a flag unimodular triangulation but whose toric ideal is not generated by quadrics.
\end{proposition}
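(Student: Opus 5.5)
The proof will be an explicit witness taken from the census of Theorem~\ref{thm:main}(ii). Among the $175$ classes of $5$-dimensional $(0,1)$-polytopes in $\cF\cU\setminus\cQ$, I would select one, say $P=\conv(A)$ with $A\subset\{0,1\}^5$, whose toric ideal $I_P$ has a minimal generator of degree three. The argument then has three parts.

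\textbf{Step 1: flag unimodular triangulation.} I would write down explicitly the flag unimodular triangulation $T$ of $P$ produced by the SAT search of \cite{DLFMR26}, given as its list of maximal simplices. Its certification is finite linear algebra. Each maximal simplex $\sigma$ satisfies $|\det(\widetilde\sigma)|=1$, so it is unimodular. The sum of the normalized volumes is $5!\,\Vol(P)$. Interiors of distinct simplices are disjoint; equivalently, every interior codimension-one face lies in exactly two simplices, on opposite sides of it. Finally, the minimal nonfaces of $T$ all have cardinality two. This can be checked either directly, or by verifying that $T$ equals the clique complex of its $1$-skeleton.

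\textbf{Step 2: a cubic minimal generator.} I would exhibit a binomial $f=x_ax_bx_c-x_{a'}x_{b'}x_{c'}$ with $a+b+c=a'+b'+c'$ in $\Z^5$, and show that $f\notin (I_P)_2S$. The key point is that both sides of $f$ lie in the same multidegree. So it suffices to show that the fiber
\[
F=\{\{u,v,w\}\subset A \text{ as a multiset}: u+v+w=a+b+c\}
\]
is disconnected in the graph whose edges are moves replacing two elements by two others with the same sum, applied to the degree-two part. The monomials $x_ax_bx_c$ and $x_{a'}x_{b'}x_{c'}$ must lie in different components. Since $A$ consists of $0/1$-vectors with at most $32$ points, the fiber is tiny and can be enumerated by hand or with a short computer check. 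Alternatively, one can compute a minimal generating set of $I_P$ (for instance with Macaulay2 or 4ti2) and read off a degree-three generator. Because the degree-two part is then not generating, $S/I_P$ also fails to be Koszul, consistent with \eqref{eq:algebra-hierarchy}.

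\textbf{Step 3: consistency with the hierarchy.} I would note why this example must be non-regular. If $T$ were regular, Sturmfels' correspondence would make its Stanley--Reisner ideal, which is quadratic by flagness, an initial ideal of $I_P$. Then $I_P$ would have a quadratic Gr\"obner basis and hence quadratic generators. So the example is consistent only because $T$ is non-regular, matching Figure~\ref{fig:flagcensus}.

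\textbf{Main obstacle.} The hardest part is the search itself: finding, among the $175$ classes, one whose toric ideal genuinely needs cubic generators. Once the candidate is found, both verifications are finite. My plan is to compute the minimal generators of $I_P$ for all $175$ classes and keep any with a generator of degree at least three. I expect such a class to exist, given the abstract's claim of non-Koszul examples. Failing that, I would fall back on proving non-Koszulness through the Hilbert series instead.
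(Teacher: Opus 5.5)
Your plan is essentially the paper's proof. The paper takes the explicit $13$-vertex polytope $P_1$ (one of the $15$ classes whose toric ideal needs a cubic), exhibits its four flag unimodular triangulations through the forced and coupled quadratic fibers, and applies Lemma~\ref{lem:fibers} to the fiber of $A$-degree $(3;1,1,1,1,2)$, where $x_{11110}x_{00001}^2$ is isolated from the $4$-cycle of quadratic moves; this gives the minimal cubic generator $x_{11110}x_{00001}^2-x_{10100}x_{01001}x_{00011}$. Two caveats. First, your proposal never names a witness, and the existence of non-Koszul examples does not by itself guarantee one, since ten of them are quadratically generated. Second, your Hilbert-series fallback could not work: non-Koszulness is weaker than the statement, and by \eqref{eq:hilbert-series} $H_{K[P]}=H_{K[\Delta]}$ with $K[\Delta]$ Koszul, so the Hilbert series carries no obstruction.
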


\begin{proposition}\label{prop:quadric-example}
There exists a $(0,1)$-polytope with a flag unimodular triangulation such that its toric ideal is generated by quadrics but its toric ring is  not Koszul.
\end{proposition}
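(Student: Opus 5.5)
This is an existence statement, so I will prove it by exhibiting one explicit $5$-dimensional $(0,1)$-polytope $P$ from the class $\cF\cU\setminus\cQ$ of Theorem~\ref{thm:main}(ii) and certifying its properties directly. The proof has three parts: pick a suitable $P$, certify that it has a flag unimodular triangulation, and certify that the toric ideal is generated by quadrics but $K[P]$ is not Koszul.

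\emph{Choice of $P$.} Among the $175$ classes, I will compute, for each polytope, a minimal generating set of $I_P$ (for instance with Macaulay2 or 4ti2) and keep those whose generators all have degree $2$. Proposition~\ref{prop:cubic-example} suggests that some classes have cubic generators, so this filter is genuinely needed.

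\emph{Flag unimodular triangulation.} For the chosen $P$, I will list the maximal simplices of a triangulation $T$ found by the SAT search. The certificate is checked in three steps:
\begin{enumerate}
\item each listed simplex has determinant $\pm1$;
\item the simplices cover $P$ and intersect properly, which can be confirmed by checking that their normalized volumes sum to $\Vol(P)$ together with the pseudomanifold condition on interior facets;
\item every minimal nonface of $T$ has two vertices.
\end{enumerate}
All three are finite computations.

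\emph{Non-Koszulness.} I will use the quadratic dual and Hilbert series. If $R=S/I_P$ is Koszul, then
\[
P_K^R(t)\,H_R(-t)=1,
\]
where $P_K^R(t)$ is the Poincar\'e series of $K$ over $R$. In particular, the formal power series $1/H_R(-t)$ must have nonnegative coefficients. Since $P$ is normal with a unimodular triangulation, $H_R$ equals the $h$-polynomial of $T$ divided by $(1-t)^6$, so it is explicitly computable.

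The first thing I will try is to expand $1/H_R(-t)$ and look for a negative coefficient. If none appears, I will instead compute the graded Betti numbers $\beta^R_{i,j}(K)$ for small $i$ over $\mathbb{Q}$ and find a nonlinear syzygy, that is, $\beta_{3,4}\neq0$ or $\beta_{i,j}\neq0$ for some $j>i$. Either outcome contradicts Koszulness. The Betti-number check is finite, since it only requires computing a truncated minimal free resolution of the residue field $K$.

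\emph{Expected obstacle.} The hard step is the non-Koszul certificate. The Hilbert series test may fail because the series is positive even though the ring is not Koszul. In that case a resolution of $K$ over a $6$-variable-dimensional ring with many generators may be computationally heavy. To keep it manageable, I will choose among the quadratic candidates the one with the fewest lattice points, which is likely to have the fewest variables. I will also reduce modulo a regular sequence of linear forms if needed, since this preserves both Koszulness and Betti numbers.
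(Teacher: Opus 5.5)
Your plan is essentially the paper's own argument: among the $175$ classes in $\cF\cU\setminus\cQ$, the paper keeps the $160$ whose toric ideals are generated by quadrics, computes the resolution of $K$ through homological degree four, and finds ten with $\beta_{3,4}^{K[P]}(K)\neq0$, which is exactly your Betti-number certificate. One caveat: your first test (a negative coefficient in $1/H_{K[P]}(-t)$) cannot succeed, because $H_{K[P]}=H_{K[T]}$ and $K[T]$ is Koszul by Fr\"oberg's theorem, so $1/H_{K[P]}(-t)$ is the Poincar\'e series of a Koszul algebra and has nonnegative coefficients; go straight to $\beta_{3,4}$, and test every quadratically generated candidate rather than only the smallest, since $150$ of the $160$ have a resolution that stays linear through homological degree four.
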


Taking lattice pyramids extends these examples to every higher dimension.

\begin{corollary}[Pyramidal extensions]\label{cor:pyramids}
{For every $d\geq5$, there exists a $d$-dimensional $(0,1)$-polytope that has a flag unimodular triangulation but a non-Koszul toric ring.} Moreover, the toric ideal can be chosen to be generated by quadrics.
\end{corollary}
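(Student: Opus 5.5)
We start from the $5$-dimensional polytope $P\subset\R^5$ of Proposition~\ref{prop:quadric-example}. It has a flag unimodular triangulation $T$, its toric ideal $I_P$ is generated by quadrics, and $K[P]$ is not Koszul. For $d\ge5$ we take the iterated lattice pyramid
\[
P_d=\conv\bigl(P\times\{0\}^{d-5}\cup\{e_6,\dots,e_d\}\bigr)\subset\R^d .
\]
We will check that $P_d$ is a $d$-dimensional $(0,1)$-polytope with the three required properties. The argument is an induction on the number of apexes, so it suffices to handle one pyramid step $Q\mapsto\operatorname{Pyr}(Q)=\conv(Q\times\{0\}\cup\{e_{m+1}\})$ for a full-dimensional $(0,1)$-polytope $Q\subset\R^m$.

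\emph{Geometry.} The lattice points of $\operatorname{Pyr}(Q)$ are $(Q\cap\Z^m)\times\{0\}$ together with $e_{m+1}$, since the last coordinate lies in $[0,1]$ and equals $1$ only at the apex. Hence $\operatorname{Pyr}(Q)$ is a $(0,1)$-polytope of dimension $m+1$. Coning each maximal simplex of $T$ from the apex gives a triangulation $\operatorname{Pyr}(T)$. It is unimodular because the apex lies at lattice height one over the base, so normalized volumes are preserved. It is flag because its minimal nonfaces are exactly those of $T$: the apex forms a face with every face of $T$.

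\emph{Algebra.} Write $x_0$ for the variable of the apex. Then $K[\operatorname{Pyr}(Q)]\cong K[Q][x_0]$ is a polynomial extension, and $I_{\operatorname{Pyr}(Q)}=I_Q\,S[x_0]$. The reason is that $\widetilde{A}$ for the pyramid is $(\widetilde A_Q\times\{0\})\cup\{(1,0,\dots,0,1)\}$, and the new vector is linearly independent of the others, so it appears in no binomial relation. Consequently the generating degrees of the toric ideal are unchanged, and quadric generation is preserved. For Koszulness we use the standard facts that, for a graded $K$-algebra $R$, the Poincaré series satisfies $P^{R[x]}_K(t)=(1+t)\,P^R_K(t)$, and that $\operatorname{Tor}^{R[x]}_{i}(K,K)_j\cong\bigoplus_{a+b=i}\operatorname{Tor}^R_a(K,K)\otimes\operatorname{Tor}^{K[x]}_b(K,K)$ by the Künneth formula, since $R[x]=R\otimes_K K[x]$. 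Because $K[x]$ is Koszul, $R[x]$ is Koszul if and only if $R$ is. Non-Koszulness therefore propagates to every $P_d$.

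We expect the main obstacle to be citing the Koszul transfer rigorously. The cleanest route is Backelin--Fröberg: a tensor product of graded algebras is Koszul if and only if each factor is. If we instead argue directly, we must check that a nonlinear strand in $\operatorname{Tor}^{K[P]}(K,K)$ survives in the tensor product. This is immediate from the Künneth decomposition: the summand with $b=0$ is a copy of $\operatorname{Tor}^{K[P]}(K,K)$ sitting in the same internal degrees, so any entry off the diagonal in $\operatorname{Tor}^{K[P]}$ gives one in $\operatorname{Tor}^{K[P_d]}$. Everything else is routine bookkeeping.
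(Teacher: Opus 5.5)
Your proposal is correct and follows the paper's argument. Both iterate lattice pyramids from one of the quadratically generated non-Koszul $5$-dimensional examples, use that coning preserves flagness and unimodularity, and use $K[\operatorname{pyr}(P)]\cong K[P][y]$ together with the fact that polynomial extension preserves quadratic generation and Koszulness in both directions; your Künneth/Backelin--Fr\"oberg justification just spells out the last point, which the paper states without detail.
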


Ohsugi and Hibi gave non-Koszul toric rings of $(0,1)$-polytopes whose toric ideals are generated by quadrics \cite[Example~2.1]{OH99quadratic}, and Matsuda later gave Gorenstein examples \cite{Mat18}. We verified that those examples do not have flag unimodular triangulations. The polytopes in Corollary~\ref{cor:pyramids} are therefore the \textit{first} lattice polytopes that have flag unimodular triangulations and non-Koszul toric rings.

We also consider \emph{delta-matroid} polytopes. A \emph{delta-matroid} generalizes a matroid by weakening basis exchange and allowing feasible sets of different cardinalities \cite{EFLS24,FMN18}. More precisely, a delta-matroid on $[n]$ is a pair $D=([n],\mathcal F)$, where $\varnothing\neq\mathcal F\subseteq2^{[n]}$ and, for all $X,Y\in\mathcal F$ and every $u\in X\mathbin\triangle Y$, there exists $v\in X\mathbin\triangle Y$, possibly $v=u$, such that $X\mathbin\triangle\{u,v\}\in\mathcal F$. Its delta-matroid polytope is
\[
 P(D)=\conv\{\mathbf 1_F:F\in\mathcal F\}\subset[0,1]^n.
\]
Here we show that Backman and Liu's result for matroid base polytopes does not extend to delta-matroids \cite{BL25}.

\begin{proposition}[{A nonnormal delta-matroid polytope}]\label{prop:delta}
{Let $D_\star=([4],\mathcal F_\star)$, where
\[
 \mathcal F_\star=\{\varnothing,\{4\}\}\cup\binom{[4]}2\cup\{[4]\}.
\]
Then $D_\star$ is a delta-matroid. Its delta-matroid polytope is}
\[
\begin{aligned}
 P_\star=\conv\{&0000,0001,1100,1010,1001,\\
                  &0110,0101,0011,1111\},
\end{aligned}
\]
and does not have the integer decomposition property. Its vertices affinely generate $\Z^4$, so $P_\star$ is also not normal. Thus, it has no unimodular triangulation.
\end{proposition}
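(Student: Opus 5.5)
The plan has three parts: check the delta-matroid axiom, exhibit an explicit point of $2P_\star$ that is not a sum of two lattice points of $P_\star$, and check that the vertices affinely generate $\Z^4$. The conclusions then follow from Lemma~\ref{lem:normal-idp} and the hierarchy in the introduction.

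\emph{Delta-matroid axiom.} Note that $\mathcal F_\star=\mathcal E\cup\{\{4\}\}$, where $\mathcal E$ is the family of all even-cardinality subsets of $[4]$. For $X,Y\in\mathcal E$ the symmetric exchange is the standard fact that the even subsets form a delta-matroid. Given $u\in X\triangle Y$, the set $X\triangle Y$ is even and nonempty, so we can pick $v\neq u$ in it, and $X\triangle\{u,v\}$ is even. The remaining cases involve $\{4\}$.
\begin{itemize}
\item If $X=\{4\}$ and $u\in X\triangle Y$, take $v=4$ when $u\neq4$; this gives $\{u\}\triangle\dots$, which lands on $\{u\}\cup\emptyset$ or on a pair. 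If $u=4$, take $v=u$, giving $\varnothing$.
\item If $Y=\{4\}$ and $X$ is even, then for $u\in X\triangle\{4\}$ we need $X\triangle\{u,v\}$ even or equal to $\{4\}$. If $X\triangle\{4\}$ contains a second element we choose it as $v$; otherwise $X\triangle\{4\}=\{u\}$ with $u=4$ and $X=\varnothing$, so $v=u$ gives $\{4\}$.
\end{itemize}
There are only $9\times 9$ pairs in total, so this is a finite check and could also be confirmed by computer.

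\emph{Failure of IDP.} Since $P_\star$ is a $(0,1)$-polytope, $P_\star\cap\Z^4$ is exactly its nine vertices. I would take the witness $x=(1,1,1,0)$.
\begin{itemize}
\item \emph{$x\in 2P_\star$.} We have $\tfrac12x=\tfrac14(1100+1010+0110)+\tfrac14\,0000\in P_\star$.
\item \emph{$x$ is not a sum of two vertices.} Any such decomposition must use two vertices with last coordinate $0$, namely from $\{0000,1100,1010,0110\}$. The coordinate sums of these are $0$ or $2$, so no two of them add up to coordinate sum $3$.
\end{itemize}
Hence IDP fails already at $k=2$.

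\emph{Affine generation and conclusion.} Take differences from the vertex $0000$. These give $e_4=0001$, then $e_1=1001-e_4$, then $e_2=1100-e_1$, and $e_3=1010-e_1$. So the affine lattice generated by the vertices is all of $\Z^4$. Lemma~\ref{lem:normal-idp} then identifies normality with IDP in this situation, so $P_\star$ is not normal. Since $(\cU)\Rightarrow(\cN)$, $P_\star$ has no unimodular triangulation.

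The main obstacle is only bookkeeping in the delta-matroid verification; the key insight is simply finding the witness $(1,1,1,0)$.
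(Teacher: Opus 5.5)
Your treatment of the integer decomposition property, the spanning property, and the final implications matches the paper. You use the same witness $1110$ and the same convex combination, and your coordinate-sum parity argument is an equivalent rephrasing of the paper's observation that two disjoint even supports cannot have union $\{1,2,3\}$. The problem is in the delta-matroid verification, where one stated choice of $v$ actually fails.

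\textbf{The case $X=\{4\}$.} In your first bullet you take $v=4$ when $u\neq 4$. Then $X\triangle\{u,v\}=\{4\}\triangle\{u,4\}=\{u\}$ with $u\in\{1,2,3\}$. Singletons other than $\{4\}$ are not in $\mathcal F_\star$. For instance, $X=\{4\}$, $Y=\{1,2\}$, $u=1$ yields $\{1\}\notin\mathcal F_\star$. Moreover, $4\notin X\triangle Y$ whenever $4\in Y$ (e.g.\ $Y=\{1,4\}$ or $Y=[4]$), so this $v$ is not even admissible. The correct choice, as in the paper, is $v=u$ for every $u$. Then $\{4\}\triangle\{u\}$ is $\varnothing$ if $u=4$ and the pair $\{u,4\}$ otherwise.

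\textbf{The case $Y=\{4\}$.} In your second bullet the residual case is described incompletely. The equality $|X\triangle\{4\}|=1$ also holds for $X=\{i,4\}$ with $i\le 3$, where $u=i\neq 4$ and $X\neq\varnothing$. The move $v=u$ still works there, since $X\triangle\{u\}=\{4\}=Y$. The paper states this uniformly: ``if $|X\triangle Y|=1$, take $v=u$, which gives $Y$.''

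With these two corrections, or with the finite $9\times 9$ check you mention actually carried out, your proof coincides with the paper's.
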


A proof of Proposition~\ref{prop:delta} appears in Section~\ref{sec:probability}. 

The classification also motivates an asymptotic question. The positive existence results surveyed in \cite{HPPS21}, and those for structured families such as matroid base polytopes \cite{BL25}, contrast sharply with Table~\ref{tab:census}.  {Haase, Paffenholz, Piechnik, and Santos formulated an asymptotic question for fixed dimension as normalized volume grows \cite[Section~1.5]{HPPS21} (see Section~\ref{sec:open}). Here, instead, the dimension $(0,1)$-polytopes grows.

\begin{corollary}[Normality is asymptotically rare]\label{cor:classes-normal}
{For each $d$, let $N=N(d)$ denote the number of $0/1$-equivalence classes of $d$-dimensional normal $(0,1)$-polytopes, and let $N_d$ denote the total number of $0/1$-equivalence classes of $d$-dimensional $(0,1)$-polytopes. As $d\to\infty$,}
\[
 {\frac{N}{N_d}\leq(1+o(1))2^d d!\,e^{-2^{d}}\longrightarrow0.}
\]
\end{corollary}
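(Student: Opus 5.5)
We count subsets of the cube rather than classes. Fix $d$ and work with vertex sets $V\subseteq\{0,1\}^d$ such that $\conv V$ is $d$-dimensional. The hyperoctahedral group $B_d$ has order $2^d d!$ and acts on such sets, and its orbits are exactly the $0/1$-equivalence classes. Since normality is invariant under this action, each normal class is an orbit of size at most $2^d d!$ made entirely of normal vertex sets, and there are at least $N$ such orbits. Hence
\[
 N\le \#\{\text{normal full-dimensional }V\}\le \#\{V : \conv V \text{ normal}\},
\]
while, since every orbit has at most $2^d d!$ elements,
\[
 N_d\ge \frac{\#\{\text{full-dimensional }V\}}{2^d d!}.
\]
The number of full-dimensional $V$ is $(1-o(1))2^{2^d}$: a subset fails to be full-dimensional only if it lies in an affine hyperplane, and a union bound over the hyperplanes spanned by cube points shows these form a vanishing fraction. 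It therefore suffices to show
\[
 \#\{V:\conv V \text{ normal}\}\le (1+o(1))\,2^{2^d}e^{-2^d}.
\]

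The key structural input is an obstruction that appears in almost every subset. The natural candidate is the $(0,1)$-analogue of the nonnormal configuration in Proposition~\ref{prop:delta}. Take a point $x\in\{0,1\}^d$ and a fixed set of cube points $S_x$ whose convex hull contains $2x$ as a lattice point of the second dilate, where $x$ itself is not a vertex. Concretely, we want a small configuration such that whenever $V\supseteq S_x$ and $x\notin V$, either the IDP fails at level $2$, or $x$ lies in $\conv V$ without being a vertex. In the latter case the normalization contains $x$ while $K[P]$ does not, once $V$ affinely spans $\Z^d$ (Lemma~\ref{lem:normal-idp}).

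The simplest version of this uses that a $(0,1)$-polytope contains no lattice points other than its vertices. Then normality reduces to the IDP, and the $2$-level failure is witnessed by a point $p\in 2P\cap\Z^d$ that is not a sum of two vertices. An example is $p$ equal to the sum of the four points $1100,0011,1010,0101$ divided by $2$ in the appropriate coordinates, as in Proposition~\ref{prop:delta}, with the relevant pairs $1111$ and $0000$ absent.

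To get the exponent $e^{-2^d}$, the plan is to find about $2^d/c$ pairwise disjoint "gadgets" $G_i$, each made of $O(1)$ cube points, which translate one another around the cube. For each gadget there is a fixed pattern (some points present, some absent) forcing nonnormality, provided the pattern is not destroyed by points outside the gadget. The obstruction must be local: the point of $2P$ must not become decomposable using other vertices of $V$. This is exactly what the $2$-face/$4$-cycle structure guarantees, as long as the obstruction can be certified by a supporting-hyperplane argument restricted to a face of the cube. We therefore place each gadget on a $2$- or $3$-dimensional face $F$ of $[0,1]^d$, and use that $P\cap F=\conv(V\cap F)$ is a face of $P$. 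Since normality (IDP) is inherited by faces, it suffices that $V\cap F$ is a bad pattern on $F$.

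Taking disjoint faces $F_1,\dots,F_m$ (for example $3$-faces, with $m=2^{d-3}$ disjoint parallel translates), independence gives
\[
 \Pr[\text{normal}]\le(1-q)^m,
\]
where $q$ is the probability that a uniform random subset of the vertices of $F$ is nonnormal. The main obstacle is matching the constant: we need $(1-q)^m\le e^{-2^d}$. Disjoint $3$-faces give only $e^{-q2^{d-3}}$, with $q=2/256$ from the Table~\ref{tab:census} count for the $3$-cube, which is too weak. I would therefore try to use overlapping faces with a correlation inequality (Janson or Harris), or a finer union over a larger face family. If that fails, I would settle for the weaker double-exponential bound $e^{-c2^d}$ and then seek a sharper local obstruction to recover the stated constant.
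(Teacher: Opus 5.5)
\textbf{The target bound is false, so no argument can reach it.} The cube $[0,1]^d$ is normal, so $N\geq1$. On the other hand $N_d\leq 2^{2^d}$, hence $N/N_d\geq 2^{-2^d}$. But
\[
 2^d d!\,e^{-2^d}=2^{-2^d}\cdot 2^d d!\,(2/e)^{2^d}=o\bigl(2^{-2^d}\bigr).
\]
Equivalently, your reduced target $\#\{V:\conv V\text{ normal}\}\leq(1+o(1))(2/e)^{2^d}$ has right-hand side tending to $0$, even though the full cube is always counted. So overlapping faces with Janson or Harris is a dead end. The paper's own proof ends with $(1+o(1))2^d d!\,e^{-2^{d-20}}$, so the exponent in the statement should be read as $2^{d-20}$. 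Your fallback bound $e^{-c2^d}$ is exactly what the paper proves. Your passage from classes to subsets and your independent-blocks estimate are the same as the paper's.

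\textbf{The fallback has a gap: your gadget fails IDP but is normal.} No subset of the vertices of a $3$-dimensional cube face has a nonnormal convex hull. By Table~\ref{tab:census}, the only class without a unimodular triangulation there is the Reeve tetrahedron. The two Reeve tetrahedra are the patterns giving your $q=2/256$. They are unimodular in their own lattice $G_P$, hence normal; they merely fail IDP. Face heredity therefore only shows that $P$ fails IDP. To conclude nonnormality you need $P$ to be spanning (Lemma~\ref{lem:normal-idp}(iii)). Your claim that normality reduces to IDP because a $(0,1)$-polytope has no non-vertex lattice points is wrong. The difference between the two properties is $G_P$ versus $\Z^{d+1}$, and the Reeve tetrahedron itself is a counterexample. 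Your sample witness also fails: $\frac12(1100+0011+1010+0101)=1111=1100+0011$. The actual witness in $P_\star$ is $1110=\frac12(0000+1100+1010+0110)$.

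\textbf{How to repair it.} Suppose $V$ contains a cube edge $\{x,x+e_i\}$ in every direction $i$. Then its difference lattice contains every $e_i$, so $V$ is full-dimensional and spanning. In direction $i$ the $2^{d-1}$ edges are disjoint, so this fails with probability at most $d(3/4)^{2^{d-1}}$. Hence
\[
 \Pr\bigl[P_d(1/2)\text{ is normal}\bigr]\leq(1-2^{-7})^{2^{d-3}}+d(3/4)^{2^{d-1}}\leq(1+o(1))\,e^{-2^{d-10}}.
\]

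\textbf{Comparison with the paper.} The paper avoids any global spanning condition by planting a gadget that is itself nonnormal: the spanning non-IDP polytope $P_\star$ of Proposition~\ref{prop:delta}. Heredity of normality alone then suffices. Since cube faces of dimension at most three carry no nonnormal patterns, this forces $4$-dimensional blocks with pattern probability $2^{-16}$, giving the weaker exponent $2^{d-20}$. Your repaired $3$-face route pays for an extra one-line spanning estimate and in return gets the better exponent $2^{d-10}$.
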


\begin{corollary}[{Triangulation and covering properties are asymptotically rare}]\label{cor:classes-triangulations}
{Denote by $\mathrm U_d$, $\mathrm{FU}_d$, $\mathrm{RU}_d$, $\mathrm{BC}_d$, $\mathrm C_d$, and $\mathrm Q_d$ the numbers of $0/1$-equivalence classes of $d$-dimensional $(0,1)$-polytopes that have, respectively, a unimodular triangulation, a flag unimodular triangulation, a regular unimodular triangulation, a unimodular binary cover, a unimodular cover, and a quadratic triangulation. Then}
\[
 {\frac{\mathrm U_d}{N_d},\quad
 \frac{\mathrm{FU}_d}{N_d},\quad
 \frac{\mathrm{RU}_d}{N_d},\quad
 \frac{\mathrm{BC}_d}{N_d},\quad
 \frac{\mathrm C_d}{N_d},\quad
 \frac{\mathrm Q_d}{N_d}\leq(1+o(1))2^d d!\,e^{-2^{d}}\longrightarrow0 
 \qquad\text{as }d\to\infty.}
\]

\end{corollary}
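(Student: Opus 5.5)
Corollary~\ref{cor:classes-triangulations} will follow from Corollary~\ref{cor:classes-normal} together with the hierarchy of implications in the introduction. Each property listed implies normality ($\cN$):
\begin{itemize}
\item[] $(\cQ)$ implies $(\cF\cU)$, which implies $(\cU)$;
\item[] $(\cQ)$ also implies $(\cR\cU)$, which implies $(\cU)$;
\item[] $(\cU)$ implies $(\cB\cC)$, then $(\cC)$, then $(\cH)$, then $(\cI)$, and finally $(\cN)$.
\end{itemize}
The $(\cI)\Rightarrow(\cN)$ link is made precise in Lemma~\ref{lem:normal-idp}. It is also where I expect the main (minor) obstacle, discussed below.

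All six properties are invariant under $0/1$-equivalence, since symmetries of the cube are unimodular affine maps of $\Z^d$ preserving $[0,1]^d$. So "has property $X$" is a well-defined predicate on equivalence classes. The class sets then satisfy the containments
\[
Q_d\subseteq FU_d\subseteq U_d,\qquad Q_d\subseteq RU_d\subseteq U_d,\qquad U_d\subseteq BC_d\subseteq C_d\subseteq N(d),
\]
and hence the counts satisfy
\[
\mathrm Q_d\le \mathrm{FU}_d\le \mathrm U_d,\qquad \mathrm Q_d\le\mathrm{RU}_d\le\mathrm U_d,\qquad \mathrm U_d\le \mathrm{BC}_d\le\mathrm C_d\le N.
\]
Dividing by $N_d$ bounds every ratio in the statement by $N/N_d$. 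Corollary~\ref{cor:classes-normal} gives $N/N_d\le(1+o(1))2^dd!\,e^{-2^d}$, and the right side tends to $0$ because $2^dd!=e^{O(d\log d)}$ is negligible against $e^{2^d}$.

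The one point needing care is the meaning of "normal" for a $(0,1)$-polytope whose lattice points do not affinely generate $\Z^d$. The chain through $(\cI)$ gives the IDP, and one needs the notion of normality used in Corollary~\ref{cor:classes-normal} to be implied by it. My plan is to check which of two routes Lemma~\ref{lem:normal-idp} supports:
\begin{itemize}
\item[] if normality means normality of the domain $K[P]$, then IDP (indeed $(\cC)$ directly) implies it, by the standard argument that IDP makes $K[P]$ equal to the integral closure of the monoid ring in the lattice generated by $\widetilde A$;
\item[] if instead Corollary~\ref{cor:classes-normal} is proved via failure of IDP in the full lattice $\Z^d$, then I would route the argument through $(\cI)$ rather than $(\cN)$.
\end{itemize}
Either way the containment of the triangulation classes in the counted class holds. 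The displayed bound then transfers verbatim.
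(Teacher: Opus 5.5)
Your reduction is the same as the paper's. Its proof is the single remark that every listed property implies $(\cN)$, so each ratio is at most $N/N_d$; your containments and the invariance under cube symmetries just make this explicit. The hedging about two notions of normality is unnecessary. Lemma~\ref{lem:normal-idp}(iii) gives $(\cI)\Rightarrow(\cN)$ for every lattice polytope. Corollary~\ref{cor:classes-normal} genuinely counts normal classes: $P_\star$ is spanning, so its failure of IDP is a failure of normality, and normality passes to faces.

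The real problem is the bound you transfer ``verbatim''. With exponent $2^d$, the displayed inequality is false for all six ratios.

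\begin{itemize}
\item Every count is at least $\mathrm Q_d\geq1$, since the unimodular simplex $\conv\{0,e_1,\dots,e_d\}$ is its own quadratic triangulation.
\item $N_d\leq 2^{2^d}$, so each ratio is at least $2^{-2^d}$.
\item However,
\[
 \frac{2^{-2^d}}{2^d d!\,e^{-2^d}}=\frac{(e/2)^{2^d}}{2^d d!}\longrightarrow\infty ,
\]
so no factor $1+o(1)$ can make the inequality hold for large $d$.
\end{itemize}

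The same error sits in the displayed statement of Corollary~\ref{cor:classes-normal}, so you cannot cite it as written. What its proof actually establishes comes from Theorem~\ref{thm:bernoulli-normality} at $p=1/2$, where $2^{d-4}p^9(1-p)^7=2^{d-20}$. This gives
\[
 \frac{N}{N_d}\leq(1+o(1))\,2^d d!\,e^{-2^{d-20}},
\]
and this is the bound your containments carry over to $\mathrm U_d,\mathrm{FU}_d,\dots,\mathrm Q_d$. The qualitative conclusion, doubly exponential decay to zero, is unaffected. Your observation that $2^d d!=e^{O(d\log d)}$ is negligible works equally well against $e^{2^{d-20}}$; only the exponent in the statement must be corrected.
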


This follows immediately from Corollary~\ref{cor:classes-normal}, since
every listed property implies $(\cN)$. Section~\ref{sec:probability} proves Corollary~\ref{cor:classes-normal} through the Bernoulli model for $(0,1)-$polytopes.

\section{A census of triangulations for \texorpdfstring{$(0,1)$}{(0,1)}-polytopes of small dimension}
\label{sec:census}

\subsection{Toric rings and triangulations}

Let $P=\conv(A)\subset\R^d$ be a lattice polytope, where $A=P\cap\Z^d$. Set
\[
 S_A=K[x_a:a\in A]
\]
and consider the homomorphism
\[
 \varphi_A:S_A\longrightarrow K[t,z_1^{\pm1},\ldots,z_d^{\pm1}],
 \qquad x_a\longmapsto tz^a.
\]
The toric ideal and the toric ring of $P$ are
\[
 I_P=\ker(\varphi_A),\qquad K[P]=S_A/I_P.
\]

Write $\widetilde A=\{(1,a):a\in A\}$ and set
\[
 M_P=\mathbb N\widetilde A,\qquad
 G_P=\mathbb Z\widetilde A,\qquad
 \sigma_P=\mathbb R_{\geq0}\widetilde A.
\]
A full-dimensional lattice polytope is called \emph{spanning} if its lattice
points affinely generate $\Z^d$, or equivalently, if
$G_P=\mathbb Z^{d+1}$.

\begin{lemma}[Normality, integral closure, and integer decomposition, see \cite{BH98}]\label{lem:normal-idp}
Let $P=\conv(A)\subset\R^d$ be a lattice polytope with
$A=P\cap\Z^d$, and use the notation above.
\begin{enumerate}[label=(\roman*)]
\item The homomorphism $\varphi_A$ identifies $K[P]$ with the affine semigroup
ring $K[M_P]$. The following conditions are equivalent:
\begin{enumerate}[label=(\alph*)]
\item $P$ is normal, that is, $K[P]$ is a normal domain;
\item $K[P]$ is integrally closed in its field of fractions;
\item $M_P$ is saturated in its group $G_P$, that is,
\[
 M_P=\sigma_P\cap G_P.
\]
\end{enumerate}
\item The following conditions are equivalent:
\begin{enumerate}[label=(\alph*)]
\item $P$ has the integer decomposition property;
\item \(M_P=\sigma_P\cap\mathbb Z^{d+1};\)
\item for every $k\geq1$, every point of $kP\cap\Z^d$ is a sum of $k$
points of $P\cap\Z^d$.
\end{enumerate}
\item The integer decomposition property implies normality. If $P$ is
spanning, then normality and the integer decomposition property are equivalent.
\end{enumerate}
\end{lemma}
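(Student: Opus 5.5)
The plan is to reduce everything to standard facts about affine semigroups, as in \cite{BH98}, proving the three parts in order.

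For (i), observe that $\varphi_A$ maps the monomial $\prod x_a^{m_a}$ to $t^{\sum m_a} z^{\sum m_a a}$. Hence the image of $\varphi_A$ is the $K$-span of the monomials $t^k z^b$ with $(k,b)\in M_P$, namely the semigroup ring $K[M_P]$. Since $I_P=\ker\varphi_A$, we get $K[P]\cong K[M_P]$, and this is a domain because it sits inside a Laurent polynomial ring.

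The equivalence (a)$\Leftrightarrow$(b) is just the definition of a normal domain. For (b)$\Leftrightarrow$(c), we invoke Hochster's criterion: an affine semigroup ring $K[M]$ is integrally closed if and only if $M$ is saturated in $\mathbb Z M$, and $\mathbb Z M\cap\mathbb R_{\geq0}M$ is the saturation. The direction (c)$\Rightarrow$(b) goes by writing $K[\sigma_P\cap G_P]$ as an intersection of localizations at the facets, which are polynomial-type rings and hence normal. The direction (b)$\Rightarrow$(c) is direct: if $x\in\sigma_P\cap G_P$, then some $mx\in M_P$, so the monomial $z^x$ lies in the fraction field and is a root of $T^m-z^{mx}$. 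Integral closedness then gives $z^x\in K[M_P]$, that is, $x\in M_P$.

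For (ii), we slice by the first coordinate. A lattice point $(k,b)$ with $k\geq1$ lies in $\sigma_P$ exactly when $b\in kP$, since $\sigma_P$ is the cone over $\{1\}\times P$. The point $(k,b)$ lies in $M_P$ exactly when $b$ is a sum of $k$ elements of $A$, because every generator has first coordinate $1$. The slice $k=0$ is just $\{0\}$ on both sides. Thus (b) and (c) are literally the same statement, and (a) is defined as (c).

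For (iii), IDP gives $M_P=\sigma_P\cap\mathbb Z^{d+1}\supseteq\sigma_P\cap G_P\supseteq M_P$, so equality holds throughout and $P$ is normal by (i). If $P$ is spanning, then $G_P=\mathbb Z^{d+1}$, and the two saturation conditions coincide.

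The only nontrivial input is the (c)$\Rightarrow$(b) direction of Hochster's theorem. We will cite it from \cite{BH98} rather than reprove it, since the rest consists of unwinding definitions.
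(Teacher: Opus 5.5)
Your proposal is correct and follows the paper's proof essentially step for step. You identify $K[P]$ with $K[M_P]$ and invoke the Hochster/Bruns--Herzog normality criterion for (i), slice $\sigma_P$ and $M_P$ by the first coordinate for (ii), and use the chain $M_P\subseteq\sigma_P\cap G_P\subseteq\sigma_P\cap\mathbb Z^{d+1}$ for (iii); your explicit (b)$\Rightarrow$(c) argument and your remark on the $k=0$ slice simply supply details the paper leaves to its citations.
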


\begin{remark}
    In some sources, a polytope satisfying (ii) is called \emph{integrally closed} see \cite[Remark~3.7]{CHHH14}.
\end{remark}

\begin{proof}
The image of $\varphi_A$ is $K[M_P]\subset
K[t,z_1^{\pm1},\ldots,z_d^{\pm1}]$, which proves the first assertion in (i)
and shows that $K[P]$ is a domain. A normal domain is, by definition, a domain
integrally closed in its field of fractions. For an affine semigroup $M$, the
semigroup ring $K[M]$ is normal if and only if $M$ is normal
\cite[Theorem~6.1.4]{BH98}; moreover, the normalization of $K[M_P]$ is
$K[\sigma_P\cap G_P]$ \cite[Proposition~1.1.2]{BGT97}. Thus the criterion in
(i) is exactly the normality criterion in \cite[Proposition~13.5]{Stu96}.

For $k\geq1$, a point $(k,x)$ lies in $\sigma_P$ if and only if
$x\in kP$. Since every generator $(1,a)$ of $M_P$ has first coordinate one,
$(k,x)\in M_P$ if and only if $x=a_1+\cdots+a_k$ for some
$a_1,\ldots,a_k\in A$. This proves (ii).

Finally, $M_P\subseteq\sigma_P\cap G_P\subseteq
\sigma_P\cap\mathbb Z^{d+1}$. Hence the equality in (ii) implies the equality
in (i). If $P$ is spanning, then $G_P=\mathbb Z^{d+1}$, so the two equalities
coincide. This proves (iii).
\end{proof}

We call $I_P$ \emph{G-quadratic} if it has a quadratic Gr\"obner basis on the
variables $x_a$, $a\in A$.  If $\Delta$ is a simplicial complex on $A$, then
$I_\Delta\subset S_A$ denotes its Stanley--Reisner ideal and
$K[\Delta]=S_A/I_\Delta$ its Stanley--Reisner ring.

A $(0,1)$-polytope is a lattice polytope $P=\conv(A)\subset[0,1]^d$, where $A\subset\{0,1\}^d$. A $(0,1)$-polytope has no lattice points other than its vertices.

\begin{lemma}\label{lem:lattice-points}
If $P=\conv(A)\subset[0,1]^d$, where $A\subset\{0,1\}^d$, then
$P\cap\Z^d=A$.
\end{lemma}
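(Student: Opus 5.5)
The inclusion $A\subseteq P\cap\Z^d$ is immediate, since every element of $A$ is an integer point and lies in $\conv(A)=P$. The plan is therefore to prove the reverse inclusion. We will show that every lattice point of the cube $[0,1]^d$ is a $(0,1)$-vector, and that a $(0,1)$-vector lying in $\conv(A)$ must already belong to $A$.

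\emph{Step 1.} Since $P\subseteq[0,1]^d$, any $x\in P\cap\Z^d$ has integer coordinates in $[0,1]$. Hence $x\in\{0,1\}^d$.

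\emph{Step 2.} Every $v\in\{0,1\}^d$ is a vertex of the cube $[0,1]^d$. Indeed, the linear functional
\[
\ell_v(y)=\sum_{i:\,v_i=1}y_i-\sum_{i:\,v_i=0}y_i
\]
attains its maximum over $[0,1]^d$ exactly at $y=v$. This maximum is unique, because each summand is separately maximized only when $y_i=v_i$.

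\emph{Step 3.} Write $x=\sum_{a\in A}\lambda_a a$ with $\lambda_a\geq0$ and $\sum\lambda_a=1$. Applying $\ell_x$ gives
\[
\ell_x(x)=\sum_{a\in A}\lambda_a\,\ell_x(a)\leq\max_{a\in A}\ell_x(a)\leq\ell_x(x).
\]
So equality holds throughout. Every $a$ with $\lambda_a>0$ therefore satisfies $\ell_x(a)=\ell_x(x)$, and by the uniqueness in Step~2 this forces $a=x$. Since some $\lambda_a$ is positive, $x\in A$.

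None of these steps is a real obstacle. The only point needing care is the uniqueness of the maximizer in Step~2, and that follows coordinatewise as noted there.
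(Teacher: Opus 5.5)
Your proof is correct and is essentially the paper's argument. The paper uses the affine function $\ell_v(x)=\sum_{v_i=1}x_i+\sum_{v_i=0}(1-x_i)$, which differs from your linear $\ell_v$ only by an additive constant, to conclude that a cube vertex is not a convex combination of other cube vertices; your Step~3 writes out the convex-combination deduction that the paper leaves implicit.
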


\begin{proof}
Every lattice point of $[0,1]^d$ is a cube vertex.  For $v\in\{0,1\}^d$, the affine
function
\[
 \ell_v(x)=\sum_{v_i=1}x_i+\sum_{v_i=0}(1-x_i)
\]
has the unique maximum $d$ at $v$.  Hence $v$ is not a convex combination of the other
cube vertices.
\end{proof}

Every polytope with a unimodular triangulation is spanning. The following standard dictionary combines results of Fr\"oberg and Sturmfels; see also \cite[Section~9.4]{DLRS10}.

\begin{proposition}\label{prop:dictionary}
Let $P=\conv(A)\subset\R^d$ be a lattice polytope with $A=P\cap\Z^d$, and let $\Delta$ be a triangulation of $P$.
\begin{enumerate}[label=(\roman*)]
\item $K[\Delta]$ is Koszul if and only if $\Delta$ is flag.
\item If $\Delta$ is regular and is induced by a weight vector $\omega$, then
\[
 \sqrt{\initial_\omega(I_P)}=I_\Delta.
\]
If, in addition, $\Delta$ is unimodular with respect to the affine lattice generated by $A$, then
$\initial_\omega(I_P)=I_\Delta$.
\item If $\Delta$ is quadratic, then $I_P$ has a quadratic Gr\"obner basis and $K[P]$
is Koszul.
\item If $P$ is a spanning $(0,1)$-polytope, then $P$ has a quadratic
triangulation if and only if $I_P$ has a quadratic Gr\"obner basis.
\end{enumerate}
\end{proposition}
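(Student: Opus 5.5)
We prove the four parts of Proposition~\ref{prop:dictionary} in order, citing the standard results and checking only the translations between them.

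For (i), this is Fr\"oberg's theorem \cite{Fro99} applied to the Stanley--Reisner ring $K[\Delta]$. The vertex set of $\Delta$ is contained in $A$; any $a\in A$ that is not a vertex of $\Delta$ contributes a variable $x_a\in I_\Delta$. Killing such linear forms does not affect Koszulness, since $S/(I+(x))\cong S'/I'$. So we may work on the vertex set of $\Delta$, where $I_\Delta$ is generated by the squarefree monomials of minimal nonfaces. Fr\"oberg's result says that a monomial ideal gives a Koszul quotient if and only if it is generated in degree two. Hence $K[\Delta]$ is Koszul exactly when every minimal nonface has two vertices, that is, when $\Delta$ is flag.

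For (ii), we invoke Sturmfels' correspondence \cite[Theorem~8.3 and Corollary~8.9]{Stu96}. Let $\omega$ be a generic weight vector inducing the regular subdivision $\Delta$. Then the radical of $\initial_\omega(I_P)$ is the Stanley--Reisner ideal of $\Delta$. Moreover, $\initial_\omega(I_P)$ is squarefree exactly when every maximal simplex has normalized volume one relative to the lattice $\Z\widetilde A$. This is the stated unimodularity with respect to the affine lattice generated by $A$. The only thing to verify is that the normalization conventions match. Sturmfels uses the lattice generated by $\widetilde A$, which is the homogenized affine lattice.

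For (iii), suppose $\Delta$ is quadratic. Regularity gives a weight $\omega$, which we refine to a term order. Unimodularity and (ii) give $\initial(I_P)=I_\Delta$. Flagness makes $I_\Delta$ generated by quadratic monomials, so the reduced Gr\"obner basis of $I_P$ is quadratic. Koszulness of $K[P]$ then follows from the first implication in \eqref{eq:algebra-hierarchy}, due to Anick. Alternatively, it follows from (i) together with the fact that Koszulness passes from an initial (flat) degeneration.

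For (iv), the forward direction is (iii). For the converse, suppose $I_P$ has a quadratic Gr\"obner basis with respect to some term order. This is the main point. The initial ideal $\initial(I_P)$ is generated by degree-two monomials, and we must show it is squarefree. A quadratic generator $x_a^2$ would have to be the leading term of a binomial $x_a^2-x_bx_c$ in $I_P$ with $\{b,c\}\neq\{a\}$. Membership in $I_P$ means $2a=b+c$, which is impossible for distinct $0/1$ vectors by Lemma~\ref{lem:lattice-points}: the point $a$ would be the midpoint of $b$ and $c$, contradicting that $a$ is a vertex. We also need the reduced Gr\"obner basis to consist of binomials, which is standard for toric ideals. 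So the initial ideal is squarefree and quadratic. By \cite[Corollary~8.9]{Stu96} it equals $I_\Delta$ for a regular triangulation $\Delta$ that is unimodular with respect to $\Z\widetilde A$. Since $P$ is spanning, this lattice is $\Z^{d+1}$, so $\Delta$ is genuinely unimodular. Finally, $I_\Delta$ is generated in degree two, so $\Delta$ is flag and hence quadratic.

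The step that needs the most care is the squarefreeness argument in (iv), which uses the $(0,1)$ hypothesis. The rest is bookkeeping of lattice normalizations.
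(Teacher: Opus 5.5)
Your proposal is correct and follows essentially the same route as the paper: Fr\"oberg's theorem for (i), Sturmfels' Gr\"obner--triangulation correspondence for (ii), combining the two with Anick's implication for (iii), and for (iv) excluding square initial monomials because $2a=b+c$ forces $a=b=c$ for $(0,1)$-vectors. Your additional remarks only spell out details the paper leaves implicit: discarding unused points of $A$ in (i), refining $\omega$ to a term order in (iii), and noting that the reduced Gr\"obner basis of a toric ideal consists of binomials in (iv).
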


\begin{proof}
Part (i) is Fr\"oberg's theorem.  Part (ii) is the Gr\"obner--triangulation
correspondence {\cite[Chapter~8]{Stu96}; see also \cite[Section~9.4]{DLRS10}.}  If $\Delta$ is regular, unimodular, and flag,
then $\initial_\omega(I_P)=I_\Delta$ is generated by squarefree quadratic monomials.
This proves (iii) by \eqref{eq:algebra-hierarchy}.

Suppose now that $I_P$ has a quadratic Gr\"obner basis.  A quadratic minimal generator of its initial ideal cannot be a square.  Indeed, a binomial with initial monomial $x_a^2$ would give $2a=b+c$.  Since $a,b,c$ are $(0,1)$-vectors, this equality forces $a=b=c$.  Hence the initial ideal is squarefree and defines a regular flag triangulation.
It is unimodular with respect to the affine lattice generated by $A$.  Since $P$ is spanning, it is
unimodular in $\Z^d$.
\end{proof}

The preceding proposition explains the role of regularity. If $\Delta$ is any
unimodular triangulation of a $d$-dimensional lattice polytope $P$, then $P$ has the integer decomposition property and hence is normal by Lemma~\ref{lem:normal-idp}, and
\begin{equation}\label{eq:hilbert-series}
 H_{K[P]}(t)=H_{K[\Delta]}(t)
 =\frac{h_\Delta(t)}{(1-t)^{d+1}}
 =\frac{h_P^*(t)}{(1-t)^{d+1}}.
\end{equation}
See \cite{Sta80,Stu96}.  If $\Delta$ is flag, then $K[\Delta]$ is Koszul.  This implies that $K[P]$ is Koszul
when $\Delta$ is regular, because $I_\Delta$ is then an initial ideal of $I_P$.  For a
nonregular triangulation, \eqref{eq:hilbert-series} is only a numerical identity.  The
Hilbert series does not determine the graded Betti numbers of the residue field.  The polytope in Proposition~\ref{prop:P1-koszul} below gives an explicit example.

\begin{proposition}[Face heredity]\label{prop:faces}
Let $P=\conv(A)\subset\R^d$ be a lattice polytope, let $\Delta$ be a triangulation of $P$, and let $F$ be a face of $P$.  Write $\Delta|_F=\{\sigma\in\Delta:\sigma\subseteq F\}$.
\begin{enumerate}[label=(\roman*)]
\item The complex $\Delta|_F$ is an induced triangulation of $F$.  If $\Delta$ is
regular, unimodular, or flag, then $\Delta|_F$ has the same property.  Unimodularity is
measured in $\aff(F)\cap\Z^d$.
\item If $I_P$ is generated by quadrics, then $I_F$ is generated by quadrics; if $I_P$ is G-quadratic, then $I_F$ is G-quadratic.  If $K[P]$ is Koszul, then $K[F]$ is Koszul. If $P$ has the integer decomposition property, then $F$ has the integer decomposition property; if $P$ is normal, then $F$ is normal.
\end{enumerate}
\end{proposition}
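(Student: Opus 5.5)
The whole argument will rest on one observation: a face is cut out by a supporting hyperplane, so anything built from points of $P$ that lands in $F$ can only use points of $F$. Fix a linear functional $\ell$ and a constant $c$ with $\ell\le c$ on $P$ and $F=P\cap\{\ell=c\}$. Then $F\cap\Z^d=A\cap F=:A_F$.

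For (i), let $\sigma\in\Delta$ meet the relative interior of $F$. Every vertex $v$ of $\sigma$ satisfies $\ell(v)\le c$, while a relative-interior point of $\sigma$ lying in $F$ attains $\ell=c$. Hence $\sigma\cap F$ is a face of $\sigma$. It follows that the simplices of $\Delta$ contained in $F$ cover $F$, and they form a subcomplex, so $\Delta|_F$ is a triangulation of $F$. It is the induced subcomplex on the vertex set $A_F$, since a simplex with all vertices in $F$ lies in $F$.

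The three properties then pass to $\Delta|_F$ as follows.
\begin{itemize}
\item \emph{Flagness.} Flagness passes to induced subcomplexes: a minimal nonface of $\Delta|_F$ is a minimal nonface of $\Delta$.
\item \emph{Regularity.} Restrict the height function to $A_F$. The lower faces of the lifted configuration lying over $F$ project to the simplices of $\Delta|_F$, because a face of the lower hull restricted over a face $F$ remains a lower face.
\item \emph{Unimodularity.} This is the one point needing care. Take a maximal simplex $\tau$ of $\Delta|_F$ of dimension $\dim F$. Extend $\tau$ to a full-dimensional simplex of $\Delta$ that contains $\tau$ as a face; such a simplex exists, since $\tau$ lies on the boundary and is covered by some maximal simplex of $\Delta$. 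Its vertices form an affine basis of $\Z^d$, so the vertices of $\tau$ form an affine basis of $\aff(F)\cap\Z^d$. Indeed, a subset of a lattice affine basis spans a saturated affine sublattice, and that sublattice is $\aff(F)\cap\Z^d$ by dimension. I expect this to be the main obstacle, and I would spell it out with the standard saturation argument.
\end{itemize}

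For (ii), I would use the grading by $\ell$. Give $x_a$ the weight $c-\ell(a)\ge0$. Every binomial $x^u-x^v$ in $I_P$ is homogeneous for this grading, since $\sum u_a a=\sum v_a a$. Let $S_{A_F}\subseteq S_A$ be the subring on the variables $x_a$ with $a\in A_F$. Then $I_F=I_P\cap S_{A_F}$, and $K[F]$ is the degree-zero part $K[P]_0$ of $K[P]$ in this grading, which is an algebra retract of $K[P]$.

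\emph{Generation by quadrics.} The map killing the variables $x_a$ with $a\notin A_F$ sends $I_P$ onto $I_F$. Any binomial generator with a monomial outside $S_{A_F}$ has positive weight, so both of its monomials are killed. Hence the quadrics generating $I_P$ map to quadrics generating $I_F$.

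\emph{G-quadratic.} Take a Gröbner basis of binomials. For a binomial whose leading term lies in $S_{A_F}$, the other term has the same weight zero and so also lies in $S_{A_F}$. Restricting the term order therefore gives a Gröbner basis of $I_F$ consisting of those binomials.

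\emph{Koszulness.} This follows because Koszulness passes to algebra retracts (Ohsugi--Herzog--Hibi); I would cite this.

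\emph{IDP and normality.} Suppose $x\in kF\cap\Z^d$ and $x=a_1+\dots+a_k$ with $a_i\in A$. Then $\ell(x)=kc$ forces $\ell(a_i)=c$ for every $i$, so all $a_i\in A_F$. The normality statement is analogous, using $\sigma_F=\sigma_P\cap\{(t,y):\ell(y)=ct\}$ and noting that $G_F\subseteq G_P$ meets $\sigma_F$ inside $M_P$. Any decomposition of such a point uses only generators from $F$, again by the weight argument.
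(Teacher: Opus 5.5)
Your proposal is correct and follows essentially the same route as the paper. Both use the supporting functional $\ell$ with weight $c-\ell(a)\geq 0$, which exhibits $K[F]$ as the weight-zero (combinatorial pure) subring and algebra retract of $K[P]$; Koszulness then follows by citing Ohsugi--Herzog--Hibi, and the same weight argument forces every summand into $F$ for the integer decomposition property and normality. The only differences are that you prove the inheritance of quadratic generation and of a quadratic Gr\"obner basis directly (both arguments are correct), and you spell out why a face of a unimodular simplex is unimodular in $\aff(F)\cap\Z^d$. The paper instead cites Ohsugi--Herzog--Hibi for the first two and states the third in one line.
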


\begin{proof}
For (i), write $F=P\cap\{\ell=c\}$, where $\ell\leq c$ on $P$.  Intersecting the
simplices of $\Delta$ with $\{\ell=c\}$ gives an induced triangulation of $F$.  A
lifting function restricts to $F$, an induced subcomplex of a flag complex is flag, and a
face of a unimodular simplex is unimodular in its affine lattice.

For (ii), after multiplying $\ell$ by a positive integer, we may suppose that $\ell$ and
$c$ are integral.  Replacing the generator $tz^a$ of $K[P]$ by
$tz^a s^{c-\ell(a)}$ gives an isomorphic homogeneous semigroup ring.  In this
realization, $K[F]$ is the combinatorial pure subring generated by the monomials not
involving $s$.  The assertions concerning quadratic generation, G-quadraticity, and Koszulness follow from
\cite[Propositions~1.1 and~1.3, and Corollary~2.5]{OHH00}.  For the final two assertions, let
$\lambda(1,a)=c-\ell(a)$ on the homogenized semigroup. This functional is
nonnegative on the generators and vanishes exactly on those belonging to
$F$. If $u$ lies in the cone of the face and in $\mathbb Z^{d+1}$, the
integer decomposition property of $P$ gives a representation of $u$ as a nonnegative integral
sum of generators of $P$. If instead $u$ lies in the cone and group of the
face semigroup, normality of $P$ gives the same conclusion. In either case,
applying $\lambda$ forces every summand to belong to $F$, so $F$ has the
corresponding property.
\end{proof}

\subsection{\texorpdfstring{{Enumerating $0/1$-equivalence classes}}{Enumerating 0/1-equivalence classes}}\label{subsec:methodology}
We follow Aichholzer's enumeration strategy \cite{Aic00}.
Let
\[
 G_d=(\mathbb Z/2\mathbb Z)^d\rtimes\mathfrak S_d
\]
{be the symmetry group of the $d$-cube; its orbits are the $0/1$-equivalence classes.} A subset of $\{0,1\}^d$ is stored as a bitmask,
and its canonical representative is the least bitmask in its $G_d$-orbit. Starting with the canonical representatives having $m$ vertices, one adjoins one cube vertex in every possible way, canonicalizes the
result, and keeps each canonical representative once.  We check full-dimensionality by fraction-free elimination on a matrix of difference vectors.

We check completeness of this enumeration independently at every value of $m$.  If
$g\in G_d$ has cycles $c$ on the $2^d$ cube vertices, then an $m$-element subset fixed by
$g$ is a union of such cycles.  By Burnside's lemma \cite[Chapter~10]{vLW01},
\[
 \#\{\text{$G_d$-orbits of $m$-element subsets}\}
 =\frac{1}{|G_d|}\sum_{g\in G_d}[z^m]
   \prod_{c\in\operatorname{cyc}(g)}(1+z^{|c|}).
\]
We enumerated the $0/1$-equivalence classes of $d$-dimensional polytopes $P=\conv(A)\subset[0,1]^d$. The
numbers of classes are}
\[
 1,\quad2,\quad12,\quad347,\quad1{,}226{,}525
 \qquad(d=1,\ldots,5).
\]
These counts agree with \cite{Aic00,CG14}; the classification by triangulation type is summarized in Table~\ref{tab:census}.

Thus $\cF\cU\setminus\cQ$ first occurs in dimension five.  The $175$ exceptional classes have between $13$ and $23$ vertices and normalized volume between $29$ and $99$. They form $131$ affine-unimodular equivalence classes.  Each has a regular unimodular triangulation, and each proper face has a quadratic triangulation. 

\textbf{The flag encoding.} Fix $P=\conv(A)$.  We find flag unimodular triangulations using the carrier
formulation of \cite{DLFMR26}.  If $\Delta$ is unimodular, then for every $k\geq2$ and every lattice point $b\in kP\cap\Z^d$, there is exactly one multiset $\{a_1,\ldots,a_k\}\subseteq A$ with
$a_1+\cdots+a_k=b$ whose support is a face of $\Delta$.  For $k=2$, group the unordered pairs of distinct vertices by their sum:
\[
 \mathcal E_b=\bigl\{\{a,a'\}\subseteq A:a\neq a',\ a+a'=b\bigr\}.
\]

Each candidate graph is checked combinatorially.  It must have no clique with more than $d+1$ vertices; every clique must be contained in a clique of size $d+1$; and every such maximal clique must span a $d$-simplex of normalized volume one. There must be exactly $\Vol(P)$ maximal simplices.  We check that these simplices meet face-to-face and cover $P$ with the wall criterion of \cite{DHSS96}; see also
\cite[Section~8.2]{DLRS10}. 

\textbf{Enumeration of flag unimodular triangulations.}
For a fixed polytope, an accepted assignment determines exactly one flag unimodular triangulation. We enumerate all of them by a standard \textsc{AllSAT} procedure \cite{TS16}: after one triangulation is accepted, a blocking clause that excludes that specific triangulation is added, and the search is restarted.  When the instance becomes unsatisfiable, the list of all flag unimodular triangulations is complete.  As checks, the procedure finds no flag unimodular triangulation for the polytope $H=\conv\{000,100,010,001,111\}$, and it finds the expected $64$ flag unimodular triangulations of the $3$-cube \cite{DLRS10}.

Applied to the $175$ classes in $\cF\cU\setminus\cQ$, this gives exactly $11{,}529$ flag unimodular triangulations.  Five classes have exactly one flag unimodular triangulation, the median class has seven, and the largest count, belonging to a seventeen-vertex polytope of normalized volume $47$, is $2{,}014$.  Most classes admit only a handful of flag unimodular triangulations, while a few classes account for most of the total. Every one of the $11{,}529$ triangulations is nonregular. We show the resulting distribution in Figure~\ref{fig:flagcensus}.

\begin{figure}[ht]
\centering
\begin{tikzpicture}
\begin{axis}[
    ybar,
    bar width=15pt,
    width=0.94\textwidth,
    height=0.37\textwidth,
    ymin=0, ymax=47,
    ytick={0,10,20,30,40},
    symbolic x coords={$1$,$2$,$3$--$4$,$5$--$8$,$9$--$16$,$17$--$32$,$33$--$64$,
        $65$--$128$,$129$--$256$,$257$--$512$,$513$--$1024$,$1025$--$2048$},
    xtick=data,
    x tick label style={font=\footnotesize,rotate=40,anchor=north east,
        yshift=2pt,xshift=2pt},
    y tick label style={font=\footnotesize},
    ylabel={number of classes},
    ylabel style={font=\small},
    xlabel={flag unimodular triangulations},
    xlabel style={font=\small},
    nodes near coords,
    every node near coord/.append style={font=\footnotesize},
    axis lines*=left,
    ymajorgrids,
    major grid style={draw=gray!25},
    clip=false
]
\addplot[fill=gray!35,draw=black!75] coordinates {
    ($1$,5) ($2$,40) ($3$--$4$,20) ($5$--$8$,31) ($9$--$16$,27) ($17$--$32$,17)
    ($33$--$64$,15) ($65$--$128$,6) ($129$--$256$,3) ($257$--$512$,7)
    ($513$--$1024$,0) ($1025$--$2048$,4)
};
\end{axis}
\end{tikzpicture}
\caption{An enumeration of the flag unimodular triangulations of the $175$ classes in $\cF\cU\setminus\cQ$, binned by powers of two.  Five classes have exactly one flag unimodular triangulation; the largest count is $2{,}014$ triangulations.}
\label{fig:flagcensus}
\end{figure}

For each of the exceptional classes, we compute the toric ideal in \textsc{Macaulay2} \cite{M2} as the kernel of the monomial map $x_a\mapsto tz^a$.  The degrees of a minimal generating set decide quadratic generation: $160$ ideals are generated by quadrics, but $15$ require one cubic.  For the $160$ quadratically generated rings, a minimal resolution of the residue field is computed through homological degree four.  A nonzero graded Betti number off the linear strand
means that the toric ring is not Koszul.  Using this procedure, we find ten additional non-Koszul rings. The Koszulness of the remaining $150$ cases open.

The implementation uses Python with exact integer verification, PicoSAT and CaDiCaL for the Boolean searches \cite{Bie08,BFFH20}, and linear programming only to check regularity. 

\subsection{A small example}

For a monomial $x^u\in S_A$, put
\[
 \deg_A(x^u)=\sum_{a\in A}u_a(1,a).
\]
The fiber of an $A$-degree $b$ is the set of monomials having degree $b$.  Equivalently,
two monomials lie in the same fiber precisely when they have the same image under
$\varphi_A$; a binomial in $I_P$ is the difference of two monomials in one fiber.  For an
$A$-degree $b$, let $G_b^{(r)}$ be the graph whose vertices are the monomials in this
fiber.  Two vertices are adjacent if their difference is a monomial multiple of a
binomial in $I_P$ of degree at most $r$.

\begin{lemma}[Fiber connectivity]\label{lem:fibers}
Let $P=\conv(A)$ be a lattice polytope, and let $r\geq1$.  For each $A$-degree $b$, let $G_b^{(r)}$ be the graph whose vertices are the monomials of $A$-degree $b$, with two vertices adjacent when their difference is a monomial multiple of a binomial in $I_P$ of degree at most $r$.  Then $I_P$ is generated by binomials of degree at most $r$ if and only if $G_b^{(r)}$ is connected for every $A$-degree $b$.
\end{lemma}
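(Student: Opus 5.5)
The plan is to prove both directions using the fact that $I_P$ is a binomial ideal spanned, as a $K$-vector space, by the binomials $x^u-x^v$ with $\deg_A(x^u)=\deg_A(x^v)$. This is the standard description of a toric ideal, valid because $\varphi_A$ sends monomials to monomials. Write $J_r$ for the ideal generated by all binomials of $I_P$ of degree at most $r$.

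\emph{Connectivity implies generation.} Suppose every graph $G_b^{(r)}$ is connected. It suffices to show that each binomial $x^u-x^v$ in one fiber lies in $J_r$. Take a path $x^u=m_0,m_1,\ldots,m_s=x^v$ in $G_b^{(r)}$. Each difference $m_{i-1}-m_i$ equals $x^w(x^{p}-x^{q})$ with $x^p-x^q\in I_P$ of degree at most $r$, so it lies in $J_r$. The sum telescopes to $x^u-x^v$, which therefore lies in $J_r$. Since these binomials span $I_P$, we get $I_P=J_r$.

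\emph{Generation implies connectivity.} Suppose $I_P$ is generated by binomials $g_j=x^{p_j}-x^{q_j}$ of degree at most $r$. Fix two monomials $x^u,x^v$ in the fiber of $b$. Then $x^u-x^v=\sum_k c_k x^{w_k}g_{j_k}$ with $c_k\in K$. Since $\varphi_A$ is multigraded by $A$-degree, we may discard every term not of $A$-degree $b$ and still have an equality. Each remaining term $x^{w_k}g_{j_k}$ is the difference of two monomials in the fiber of $b$, so it is an edge of $G_b^{(r)}$.

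The main obstacle is to turn this linear relation into a path. Work in the $K$-vector space with basis the monomials of the fiber. Let $C$ be the connected component of $x^u$, and let $\pi$ be the linear functional that sends each monomial in $C$ to $1$ and every other monomial to $0$. Each edge vector $m-m'$ joins two vertices of the same component, so $\pi(m-m')=0$. Applying $\pi$ to the relation gives $\pi(x^u-x^v)=0$, that is, $1-\pi(x^v)=0$. Hence $x^v\in C$, and $G_b^{(r)}$ is connected.

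The fiber may be infinite only when $P$ is not a polytope. Here the fiber is finite, since the first coordinate of $b$ fixes the total degree, but the argument above does not depend on this.
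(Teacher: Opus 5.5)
Your proof is correct and follows the same route as the paper. A path telescopes into an element of the ideal generated by binomials of degree at most $r$, and conversely an ideal-membership expression for $x^u-x^v$, restricted to the fiber of $b$, puts $x^u$ and $x^v$ in one component; your component-indicator functional $\pi$ makes rigorous the paper's terse ``sequence of replacements'' step, which is not immediate when the coefficients $c_k\in K$ are arbitrary, and you also state explicitly the needed fact that $I_P$ is spanned by the fiber binomials.
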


\begin{proof}
Take a path in $G_b^{(r)}$, the difference of its endpoints is a sum of monomial
multiples of binomials of degree at most $r$.  Conversely, an expression in such
binomials gives a sequence of replacements inside the fiber. Hence the endpoints lie in
the same connected component.
\end{proof}

Consider the $(0,1)$-polytope
\begin{align*}
P_1=\conv\{&00000,10000,01000,10100,01100,10010,01010,11110,\\
            &00001,10001,01001,00101,00011\}.
\end{align*}
Its normalized volume is $29$, and
\[
 h_{P_1}^*(t)=1+7t+13t^2+7t^3+t^4.
\]
A \emph{quadratic fiber} (a sum of two vertices of $P_1$ that potentially form an edge of the triangulation) is indexed by a vector $z\in\Z^5$.  Because $P_1$ is a
$(0,1)$-polytope, every nontrivial quadratic fiber consists of squarefree monomials,
which we identify with unordered pairs of distinct vertices whose sum is $z$.  In a
flag unimodular triangulation exactly one pair in each nontrivial quadratic fiber is
selected as an edge.  We write a pair $\{a,b\}$ as $a+b$.

There are fourteen nontrivial quadratic fibers for $P_1$.  Seven of their selected edges
are forced.  In the following array, the first columns give the fiber sum $z$,
and the adjacent entry gives the edge which must be selected:
\[
\begin{array}{c|c@{\qquad}c|c}
01001&01000+00001&10001&10000+00001\\
01011&01010+00001&10011&10010+00001\\
01101&01100+00001&10101&10100+00001\\
11110&00000+11110&&
\end{array}
\]
For example, the fiber over $01001$ consists of
$00000+01001$ and $01000+00001$, and the second pair is forced. In the fiber over $11110$,
the selected pair is $00000+11110$ and the two rivals are $10100+01010$ and
$01100+10010$.

The next six fibers each have the two displayed possibilities.  The degree-three fiber constraints couple these choices: they cannot be selected independently.  Either all six
edges in column $\mathsf A$ occur, or all six edges in column $\mathsf B$ occur:
\[
\begin{array}{c|c|c}
\toprule
z&\mathsf A&\mathsf B\\
\midrule
01111&01010+00101&01100+00011\\
10111&10100+00011&10010+00101\\
11010&01000+10010&10000+01010\\
11011&10010+01001&01010+10001\\
11100&10000+01100&01000+10100\\
11101&01100+10001&10100+01001\\
\bottomrule
\end{array}
\]
The remaining quadratic fiber is independent of these two patterns:
\[
 11001:\qquad10000+01001\quad\text{or}\quad01000+10001.
\]
Thus the two global patterns and the two choices in the free fiber give four flag
unimodular triangulations.  Each has $29$ maximal simplices.

\begin{proposition}\label{prop:P1-regularity}
No flag unimodular triangulation of $P_1$ is regular.
\end{proposition}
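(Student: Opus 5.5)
The plan is to argue by contradiction with the Gr\"obner--triangulation correspondence of Proposition~\ref{prop:dictionary}(ii), and to exhibit an explicit Farkas-type certificate. Suppose one of the four flag unimodular triangulations $\Delta$ of $P_1$ is regular, induced by a height vector $\omega\in\R^{A}$. Since $P_1$ is spanning and $\Delta$ is unimodular, we have $\initial_\omega(I_{P_1})=I_\Delta$. Hence in every fiber of every degree $k$ the unique monomial whose support is a face of $\Delta$ is the unique standard monomial. It is therefore the \emph{strict} $\omega$-minimum among all monomials in that fiber, because every other monomial of the fiber lies in $\initial_\omega(I_{P_1})$. Each fiber thus yields strict linear inequalities in the thirteen unknowns $\omega(a)$. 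The goal is to show that, for each of the four triangulations, these inequalities are infeasible.

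\textbf{Step 1: the six coupled fibers.} In each of the six coupled quadratic fibers, the selected pair beats the rival pair. If pattern $\mathsf A$ is chosen, summing the six inequalities $\omega(\mathsf A)<\omega(\mathsf B)$ makes most vertices cancel. A direct count of multiplicities leaves
\[
 \omega(10010)+\omega(01100)<\omega(01010)+\omega(10100).
\]
Pattern $\mathsf B$ gives the reverse strict inequality. Both pairs $10010+01100$ and $01010+10100$ are the two rivals in the fiber over $11110$. The degree-two data only say that both lose to $00000+11110$, so they do not compare the two rivals to each other.

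\textbf{Step 2: close the cycle using degree-three fibers.} The missing comparison has to come from fibers of degree $3$. The plan is to find a degree-three $A$-degree $b$ with the following property. Its standard monomial in $\Delta$ is forced by the edges already determined: the forced edges, the pattern edges, and the free fiber $11001$, where we split into its two cases. Comparing that standard monomial with suitable non-face monomials in the same fiber should produce an inequality in the opposite direction to Step~1, after cancellation against the quadratic inequalities.

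Natural candidates are the fibers containing $x_{00000}x_{11110}x_c$ together with $x_{10010}x_{01100}x_c$ and $x_{01010}x_{10100}x_c$ for a vertex $c$ such as $00001$. In such a fiber the standard monomial is a triangle of $\Delta$, and it involves a pattern-dependent edge. Concretely, I would set up, for each of the four triangulations, the linear program whose constraints are the strict inequalities from all fibers of degree $\le 3$, and extract a nonnegative combination summing to $0<0$. Such a certificate is a short rational vector that can be verified by hand. Because of the symmetry swapping $\mathsf A$ and $\mathsf B$, which is induced by exchanging coordinates $1$ and $2$, one certificate should transfer to the other pattern. The free fiber $11001$ should not enter, so only one genuinely distinct certificate ought to be needed.

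The main obstacle is Step~2: identifying which degree-three fibers provide the reverse inequality. The degree-two constraints alone are feasible, as Step~1 shows. I would first let an LP solver find a minimal infeasible subsystem, and then rewrite it as a transparent sum of a few fiber inequalities.
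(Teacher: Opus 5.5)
Your setup is sound. If $\omega$ induces a regular unimodular triangulation $\Delta$, then in every nontrivial quadratic fiber the selected pair is the strict $\omega$-minimum. Your Step~1 sum of all six pattern-$\mathsf A$ inequalities is also computed correctly.

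The error is the conclusion you draw from it. One particular nonnegative combination failing to collapse to $0<0$ says nothing about feasibility. You have to consider all nonnegative combinations, and a sub-sum of your six inequalities already collapses. Use only the fibers over $01111$, $10111$, $11011$ and $11101$. For pattern $\mathsf A$ they give
\begin{align*}
\omega(01010)+\omega(00101)&<\omega(01100)+\omega(00011),\\
\omega(10100)+\omega(00011)&<\omega(10010)+\omega(00101),\\
\omega(10010)+\omega(01001)&<\omega(01010)+\omega(10001),\\
\omega(01100)+\omega(10001)&<\omega(10100)+\omega(01001).
\end{align*}
Each of the eight vertices $01100,10001,10100,00011,01010,00101,10010,01001$ occurs exactly once on each side, so the sum reads $0<0$. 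Pattern $\mathsf B$ reverses all four inequalities. The residual inequality you found is just the sum of the two remaining fibers $11010$ and $11100$, added on top of this identically vanishing combination. This four-fiber cycle is the whole of the paper's proof: no degree-three fibers, no case split on $11001$, and no linear program are needed.

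As written, your proposal rests on a false assertion, namely that the degree-two constraints are feasible. It also ends in an unexecuted plan: Step~2 names no fiber and no certificate, only the expectation that an LP solver will extract one from degree-three data. An LP over all fibers of degree at most three would indeed report infeasibility, but because of the degree-two certificate above, not for the reason you anticipate. Until an explicit certificate is exhibited and checked, the argument is not a proof. The missing idea is to look for a zero-sum sub-collection among the six coupled quadratic fibers instead of summing all of them.
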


\begin{proof}
For pattern $\mathsf A$, an inducing height function $\omega$ would satisfy
\begin{align*}
\omega(01100)+\omega(10001)&<\omega(10100)+\omega(01001),\\
\omega(10100)+\omega(00011)&<\omega(10010)+\omega(00101),\\
\omega(01010)+\omega(00101)&<\omega(01100)+\omega(00011),\\
\omega(10010)+\omega(01001)&<\omega(01010)+\omega(10001).
\end{align*}
The sum of the left-hand sides equals the sum of the right-hand sides.  This is
impossible.  Pattern $\mathsf B$ reverses all four inequalities and gives the same
contradiction.  The free fiber does not occur in the sum.
\end{proof}

The same four fibers show that there is a minimal cubic generator.  The fiber of $A$-degree
$(3;1,1,1,1,2)$ consists of
\begin{align*}
m_0&=x_{11110}x_{00001}^2,\\
m_1&=x_{10100}x_{01001}x_{00011},\\
m_2&=x_{01100}x_{10001}x_{00011},\\
m_3&=x_{10010}x_{01001}x_{00101},\\
m_4&=x_{01010}x_{10001}x_{00101}.
\end{align*}
The last four monomials form the quadratic-move cycle
\[
 m_1\longleftrightarrow m_2\longleftrightarrow m_4
 \longleftrightarrow m_3\longleftrightarrow m_1,
\]
whereas $m_0$ is isolated.

\begin{proposition}\label{prop:P1-koszul}
For every field $K$, the toric ideal $I_{P_1}$ is not generated by quadrics and the
toric ring $K[P_1]$ is not Koszul.  More precisely, $I_{P_1}$ has a minimal cubic
generator
\[
 x_{11110}x_{00001}^2-x_{10100}x_{01001}x_{00011}.
\]
\end{proposition}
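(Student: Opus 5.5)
The plan is to apply the fiber-connectivity criterion of Lemma~\ref{lem:fibers} to the single $A$-degree $b=(3;1,1,1,1,2)$, and then to pass from non-quadratic generation to non-Koszulness through the hierarchy \eqref{eq:algebra-hierarchy}.

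\textbf{Step 1: the fiber over $b$.} First I will check that the fiber over $b$ is exactly $\{m_0,\dots,m_4\}$. A monomial of degree $b$ is a multiset of three vertices of $P_1$ whose coordinates sum to $(1,1,1,1,2)$.
\begin{itemize}
\item[] The fifth coordinate equals $2$, so exactly two of the three vertices have last coordinate $1$. Hence the third vertex has last coordinate $0$.
\item[] The first four coordinates sum to $4$, and the available vertices ending in $1$ have weight at most one in their first four coordinates, apart from the weight-two vertices $00101$ and $00011$.
\end{itemize}
A short case split on which vertices ending in $1$ are used (two copies of $00001$, one of $00011$ or $00101$, and so on) lists all solutions. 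I expect the list to be $m_0,\dots,m_4$.

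\textbf{Step 2: disconnection at $r=2$.} Next I show that $G_b^{(2)}$ is disconnected, with $m_0$ isolated. An edge between two monomials of the fiber requires them to share a variable, after which the remaining degree-two parts must differ by a quadric in $I_{P_1}$.
\begin{itemize}
\item[] Consider $m_0=x_{11110}x_{00001}^2$. The only monomials in the fiber containing $x_{11110}$ or $x_{00001}$ are $m_0$ itself, since $m_1,\dots,m_4$ use neither variable. So $m_0$ shares no variable with any other monomial in the fiber and has no neighbor.
\item[] There is also no linear binomial: distinct vertices have distinct degrees, so no move in degree one is available.
\end{itemize}
By Lemma~\ref{lem:fibers}, $I_{P_1}$ is therefore not generated by quadrics. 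This argument involves only exponent vectors, so it is independent of the field $K$.

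\textbf{Step 3: the displayed cubic is a minimal generator.} The binomial $f=m_0-m_1$ lies in $I_{P_1}$ because $m_0$ and $m_1$ lie in the same fiber. I will show that $f$ is a minimal generator by the usual graded argument. The degree-$3$ component of $I_{P_1}$ in multidegree $b$ is spanned by the differences $m_i-m_j$. The subspace generated by lower-degree elements of the ideal, in multidegree $b$, is spanned by monomial multiples of quadratic binomials. By Step~2, these are exactly differences within the component $\{m_1,\dots,m_4\}$, so they span the space of combinations $\sum c_i m_i$ with $c_0=0$ and $\sum c_i=0$. Since $f$ has $m_0$-coefficient $1$, it lies outside this span. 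Hence $f$ is a minimal cubic generator over every field.

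\textbf{Step 4: non-Koszulness.} The ideal $I_{P_1}$ contains no linear forms, because distinct vertices have distinct degrees. So the second implication of \eqref{eq:algebra-hierarchy} applies, and Koszulness would force $I_{P_1}$ to be generated by quadrics, contradicting Step~2. Therefore $K[P_1]$ is not Koszul.

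The only step that needs real care is the exhaustive fiber enumeration in Step~1. Everything after it is formal.
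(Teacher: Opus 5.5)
Your proposal is correct and follows the paper's route: you show $G^{(2)}_{(3;1,1,1,1,2)}$ is disconnected with $m_0$ isolated, apply Lemma~\ref{lem:fibers}, and then use the contrapositive of the second implication in \eqref{eq:algebra-hierarchy}; your shared-variable argument for the isolation of $m_0$ and your graded argument for the minimality of the cubic spell out what the paper only asserts. One small slip in Step~1: all five vertices ending in $1$, including $00101$ and $00011$, have weight at most one in their first four coordinates, and carrying out the case split does confirm that the fiber is exactly $\{m_0,\dots,m_4\}$.
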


\begin{proof}
The graph $G_{(3;1,1,1,1,2)}^{(2)}$ is disconnected.  Lemma~\ref{lem:fibers} therefore
shows that $I_{P_1}$ is not generated by quadrics.  The contrapositive of the second implication in
\eqref{eq:algebra-hierarchy} shows that $K[P_1]$ is not Koszul.
\end{proof}

Together with the four flag unimodular triangulations constructed above, Proposition~\ref{prop:P1-koszul} proves Proposition~\ref{prop:cubic-example}.

Let $\Delta$ be any one of the four flag unimodular triangulations of $P_1$.  Then
Fr\"oberg's theorem and \eqref{eq:hilbert-series} give
\[
 K[\Delta]\text{ is Koszul},\qquad K[P_1]\text{ is not Koszul},
\]
and
\[
 H_{K[\Delta]}(t)=H_{K[P_1]}(t)
 =\frac{1+7t+13t^2+7t^3+t^4}{(1-t)^6}.
\]
This explains why flagness does not imply Koszulness of the toric ring.  Flagness is a
statement about $I_\Delta$.  Only regularity identifies $I_\Delta$ with an initial ideal
of $I_P$.

Exactly $15$ exceptional ideals require one cubic in addition to quadrics.  The other
$160$ are generated by quadrics.  None has a quadratic Gr\"obner basis.  Ten of the corresponding rings satisfy
\[
 \beta_{3,4}^{K[P]}(K)\neq0
\]
and hence are not Koszul.  For the remaining $150$ rings, the resolution of the residue field is linear through homological degree four. Their Koszulness remains open.  All $175$ polytopes satisfy the clique-face condition of Mori and Ohsugi \cite{MO26}, which is necessary for quadratic generation.

The ten non-Koszul rings among the $160$ quadratically generated cases give Proposition~\ref{prop:quadric-example}.  We now prove the pyramidal extension stated in the introduction.

\begin{proof}[Proof of Corollary~\ref{cor:pyramids}]
Take iterated lattice pyramids.  If $P\subset\R^r$ is a $(0,1)$-polytope, then
\[
 \operatorname{pyr}(P)=\conv\bigl((P,0)\cup\{e_{r+1}\}\bigr)
\]
is again a $(0,1)$-polytope.  Coning preserves flagness and unimodularity, while a
quadratic triangulation of the pyramid restricts to a quadratic triangulation of its base.
Moreover,
$K[\operatorname{pyr}(P)]\simeq K[P][y]$.  Polynomial extension preserves quadratic
generation and preserves Koszulness in both directions.  Apply this to $P_1$, or to one
of the ten quadratically generated non-Koszul examples.
\end{proof}

\section{A probabilistic proof that normality is rare}
\label{sec:probability}

Let $S_d(p)\subseteq\{0,1\}^d$ be obtained by choosing each cube vertex independently
with probability $p$, and set $P_d(p)=\conv(S_d(p))$.  We use the standard
independent block argument from the probabilistic method; see \cite[Chapter~1]{AS16}.

\begin{proof}[Proof of Proposition~\ref{prop:delta}]
The even subsets of $[4]$ form an even delta-matroid, so only exchanges
involving the additional feasible set $\{4\}$ require verification. Suppose
first that $X=\{4\}$ and $Y$ is even. For any
$u\in X\mathbin\triangle Y$, take $v=u$. Then
$X\mathbin\triangle\{u\}$ is either $\varnothing$ or a two-element set,
hence feasible. Conversely, suppose that $X$ is even and $Y=\{4\}$. If
$|X\mathbin\triangle Y|\geq2$, choose $v\neq u$ in
$X\mathbin\triangle Y$; toggling two distinct elements preserves even
cardinality. If $|X\mathbin\triangle Y|=1$, take $v=u$, which gives $Y$.
Thus $D_\star$ is a delta-matroid.

Since $0000$ is a vertex, the differences
\[
 0001-0000=e_4,\qquad
 1001-0001=e_1,\qquad
 0101-0001=e_2,\qquad
 0011-0001=e_3
\]
show that the vertices of $P_\star$ affinely generate $\Z^4$. Moreover,
\[
 \frac12(1110)=\frac14(0000+1100+1010+0110)\in P_\star,
\]
so $1110\in2P_\star\cap\Z^4$. By Lemma~\ref{lem:lattice-points}, the
lattice points of $P_\star$ are exactly its vertices. The point
$1110$ is not a sum of two of them: its last coordinate forces both summands
to have last coordinate zero, and their supports would then be disjoint
even-cardinality subsets whose union is $\{1,2,3\}$, which is impossible.
Hence $P_\star$ does not have the integer decomposition property.
Since $P_\star$ is spanning, Lemma~\ref{lem:normal-idp}(iii) shows that
$P_\star$ is not normal. Finally,
$(\cC)\Rightarrow(\cI)$ shows that $P_\star$ has no unimodular cover, and therefore no unimodular triangulation.
\end{proof}

\begin{theorem}[Normality in the Bernoulli model]\label{thm:bernoulli-normality}
For every $d\geq4$ and $p\in(0,1)$,
\[
 \Pr\bigl[P_d(p)\text{ is normal}\bigr]
 \leq\left(1-p^9(1-p)^7\right)^{2^{d-4}}
 \leq\exp\!\left(-2^{d-4}p^9(1-p)^7\right).
\]
\end{theorem}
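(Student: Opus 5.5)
The plan is an independent-blocks argument built around the nonnormal delta-matroid polytope $P_\star$ of Proposition~\ref{prop:delta}. Partition $\{0,1\}^d$ into $2^{d-4}$ disjoint $4$-dimensional subcubes $C_w=\{(x,w):x\in\{0,1\}^4\}$, one for each $w\in\{0,1\}^{d-4}$. Each $C_w$ is the vertex set of a face $Q_w=[0,1]^4\times\{w\}$ of $[0,1]^d$. Let $E_w$ be the event that $S_d(p)\cap C_w$ equals exactly $\{(a,w):a\text{ a vertex of }P_\star\}$.

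\textbf{Step 1: the probability of $E_w$.} The event $E_w$ prescribes that $9$ specific points of $C_w$ are chosen and the remaining $7$ are not. Hence $\Pr[E_w]=p^9(1-p)^7$. Because the blocks $C_w$ are disjoint and the vertices are selected independently, the events $E_w$ are mutually independent.

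\textbf{Step 2: $E_w$ forces nonnormality.} Write $P=P_d(p)=\conv(S)$ with $S=S_d(p)$. Since $Q_w$ is a face of the cube containing $P$, the set $F=P\cap Q_w$ is a face of $P$. Moreover $F=\conv(S\cap Q_w)$: a supporting linear functional of the cube face is maximized on $P$ exactly at the convex hulls of points of $S$ lying in $Q_w$. On $E_w$, the face $F$ is $P_\star\times\{w\}$, a translate of $P_\star$ by a lattice vector, embedded via the coordinate inclusion $\Z^4\hookrightarrow\Z^d$.

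Normality depends only on the semigroup $M_F$ and its group. These are isomorphic, through the lattice isomorphism $(1,a,w)\mapsto(1,a)$, to those of $P_\star$. So $F$ is not normal by Proposition~\ref{prop:delta}. By Proposition~\ref{prop:faces}(ii), if $P$ were normal then its face $F$ would be normal, which is a contradiction. Therefore $P$ is not normal on $E_w$.

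A small point needs checking here. Proposition~\ref{prop:faces} is stated for lattice polytopes in $\R^d$ without a full-dimensionality assumption, and its proof works with the homogenized semigroup and the functional $\lambda$. This applies even when $P$ is lower-dimensional, since then the same argument runs inside the group $G_P$.

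\textbf{Step 3: conclusion.} From Step 2,
\[
\Pr[P\text{ normal}]\le\Pr\Bigl[\bigcap_w E_w^c\Bigr]=\prod_w\bigl(1-p^9(1-p)^7\bigr)=\bigl(1-p^9(1-p)^7\bigr)^{2^{d-4}},
\]
where the middle equality uses the independence from Step 1. The second inequality in the statement follows from $1-x\le e^{-x}$.

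The only step with real content is Step 2: identifying $P\cap Q_w$ as a face equal to $\conv(S\cap Q_w)$, and transferring nonnormality through the lattice isomorphism. I expect Step 2 to be the main point to write carefully, although it is routine.
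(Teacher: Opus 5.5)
Your proof is correct and follows essentially the same route as the paper: partition into the $2^{d-4}$ coordinate $4$-subcubes, observe that each block independently realizes a copy of the vertex set of $P_\star$ with probability $p^9(1-p)^7$, and conclude nonnormality from Proposition~\ref{prop:delta} together with face heredity (Proposition~\ref{prop:faces}). Your extra checks, namely that $P\cap Q_w=\conv(S\cap Q_w)$ is a face and that normality transfers through the lattice isomorphism $(1,a,w)\mapsto(1,a)$, spell out what the paper compresses into ``a face of $P_d(p)$ lattice-isomorphic to $P_\star$.''
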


\begin{proof}
Partition the $d$-cube into the $2^{d-4}$ disjoint coordinate $4$-cubes obtained
by fixing the last $d-4$ coordinates.  In each block, the probability that the
chosen vertices are exactly a copy of the vertex set of $P_\star$ is
$p^9(1-p)^7$, and these events are independent.  If one occurs, the convex hull
of the chosen vertices in that block is a face of $P_d(p)$ lattice-isomorphic
to $P_\star$.  Propositions~\ref{prop:delta} and~\ref{prop:faces} therefore imply
that $P_d(p)$ is not normal.  The first inequality follows by independence, and
the second from $1-x\leq e^{-x}$.
\end{proof}

The proof uses no special feature of $P_\star$; any fixed nonnormal $(0,1)$-polytope can be used in its place. At $p=1/2$, Theorem~\ref{thm:bernoulli-normality} gives
\[
 \Pr\bigl[P_d(1/2)\text{ is normal}\bigr]\leq e^{-2^{d-20}}.
\]

\begin{proof}[Proof of Corollary~\ref{cor:classes-normal}]
Let $X_d$ be the set of full-dimensional subsets of $\{0,1\}^d$.  We first note
that
\[
 |X_d|=(1-o(1))2^{2^d}.
\]
Indeed, a non-full-dimensional subset is contained in the affine span of at most $d$
cube vertices.  The number of affine spans obtained in this way is at most
\[
 \sum_{j=0}^d\binom{2^d}{j}<2^{d^2+d}.
\]
Every proper affine subspace is contained in a proper affine hyperplane.  Such a
hyperplane contains at most $2^{d-1}$ cube vertices: after choosing a coordinate with
nonzero coefficient in a defining equation, the other $d-1$ coordinates determine at
most one value of the chosen coordinate.  Thus the number of non-full-dimensional
subsets is at most
\[
 {2^{d^2+d}2^{2^{d-1}},}
\]
and its proportion among all $2^{2^d}$ subsets is at most
\[
 2^{d^2+d-2^{d-1}}=o(1).
\]

Let $G_d$ be the symmetry group of the cube, so that $|G_d|=2^d d!$, and let
$Y_d\subseteq X_d$ be the set of full-dimensional subsets whose convex hull is
normal.  Since every $G_d$-orbit has at most $|G_d|$ elements, {and writing $N=N(d)$ for the number of normal $0/1$-equivalence classes,}
\[
 N_d\geq\frac{|X_d|}{|G_d|},
 \qquad
 {N\leq |Y_d|.}
\]
Consequently,
\begin{align*}
 {\frac{N}{N_d}}
 &{\leq\frac{|G_d|\,|Y_d|}{|X_d|}}\\
 &{\leq(1+o(1))2^d d!\,
      \Pr\bigl[P_d(1/2)\text{ is normal}\bigr]}\\
 &{\leq(1+o(1))2^d d!\,e^{-2^{d-20}}
 \longrightarrow0.}
\end{align*}
\end{proof}

\section{Open problems}\label{sec:open}

The examples above show that, for an arbitrary a flag unimodular triangulation does not imply Koszulness of the toric ring.  {But polymatroid base polytopes may still behave differently.} See also the open problems in \cite{BCLLM26,HH02,DLFMR26}.

\begin{problem}[Polymatroid base rings]\label{prob:polymatroid}
Let $P$ be a polymatroid base polytope which has a flag unimodular triangulation.
Must $K[P]$ be Koszul? In particular, is the toric ring of the Fano matroid Koszul \cite{BCLLM26}?
\end{problem}

\begin{problem}[Typical triangulable $(0,1)$-polytopes]\label{prob:conditional}
{Let $u_d$, $fu_d$, and $q_d$ denote, respectively, the numbers of $0/1$-equivalence classes of $d$-dimensional $(0,1)$-polytopes that have a unimodular triangulation, a flag unimodular triangulation, and a quadratic triangulation.}
Do
\[
 \frac{fu_d}{u_d}\longrightarrow0
 \qquad\text{and}\qquad
 \frac{q_d}{fu_d}\longrightarrow0?
\]
\end{problem}

The known class ratios $fu_d/u_d$ in dimensions three, four, and five are
\[
 \frac{10}{11},\qquad \frac{212}{303},\qquad
 \frac{153{,}966}{815{,}303},
\]
while the corresponding ratios $q_d/fu_d$ are
\[
 1,\qquad 1,\qquad \frac{153{,}791}{153{,}966}.
\]

Haase, Paffenholz, Piechnik, and Santos asked how often fixed-dimensional lattice polytopes of bounded normalized volume admit unimodular triangulations \cite[Section~1.5]{HPPS21}. The number of unimodular-equivalence classes of $d$-dimensional lattice polytopes has logarithmic growth on the scale $V^{(d-1)/(d+1)}$ \cite{BV92,BY14}. Equivalently, a family of exponential size $e^{\Theta(m)}$ has the  volume scale $m^{(d+1)/(d-1)}$. This suggests the following two problems.

\begin{problem}[{Nonnormal polytopes at the counting scale}]\label{prob:lattice-family}
{For every fixed $d\geq3$, construct constants $c,C>0$ and, for every sufficiently large $m$, at least $e^{cm}$ pairwise unimodularly not equivalent nonnormal $d$-dimensional lattice polytopes of normalized volume at most
\[
 C m^{(d+1)/(d-1)}.
\]}
\end{problem}

\begin{problem}[{Haase--Paffenholz--Piechnik--Santos at the counting scale}]\label{prob:fixed-volume}
{Fix $d\geq3$. For $V>0$, let $u(d,V)$ be the fraction of unimodular-equivalence classes of $d$-dimensional lattice polytopes of normalized volume at most $V$ that admit a unimodular triangulation. Does
\[
 \limsup_{V\to\infty}u(d,V)^{1/V^{(d-1)/(d+1)}}<1?
\]}
\end{problem}

Backman and Liu proved that every matroid
base polytope has a regular unimodular triangulation \cite{BL25}. {Here we show that this result does not extend to \emph{delta-matroids}; see also \cite{EFLS24,FMN18,CSVY23}.} The even delta-matroid
\[
 D_3=([3],\mathcal F_3),\qquad
 \mathcal F_3=\{\varnothing,\{1,2\},\{1,3\},\{2,3\}\},
\]
has the Reeve tetrahedron $\conv\{000,110,101,011\}$ as its delta-matroid
polytope \cite{Ree57}. This simplex is unimodular, and hence normal, in the
affine lattice generated by its vertices, but it does not have the integer decomposition property in
$\Z^3$: indeed,
\[
 \frac12(111)=\frac14(000+110+101+011),
\]
while $111$ is not a sum of two lattice points of the tetrahedron. By
contrast, Proposition~\ref{prop:delta} gives a four-element delta-matroid
whose polytope is spanning and not normal. Deletion and contraction
correspond to coordinate faces, so a $D_3$ minor obstructs $(\cI)$, whereas
a $D_\star$ minor obstructs normality. For the type-$B$ viewpoint and positive results for lattice-path
{delta-matroids}, see \cite{EFLS24,CSVY23}.

\begin{problem}[{delta-matroid polytopes}]\label{prob:deltamatroid}
{Let $\mathfrak D_n$ be the set of labeled delta-matroids on $[n]$. Does}
\[
 \frac{|\{D\in\mathfrak D_n:P(D)\text{ is normal}\}|}
 {|\mathfrak D_n|}\longrightarrow0?
\]
At least, does
\[
 \frac{|\{D\in\mathfrak D_n:P(D)\text{ has a unimodular triangulation}\}|}
 {|\mathfrak D_n|}\longrightarrow0?
\]
\end{problem}

{The asymptotic enumeration of \emph{delta-matroids} was initiated in \cite{FMN18}. In the related $q$-analogue, Degen and K\"uhne proved that asymptotically almost all $q$-matroids are not representable \cite{DK24}.}

\begin{problem}[Koszulness in a family of (0,1)-polytopes]\label{prob:koszul}
Determine which of the remaining $150$ quadratically generated toric rings are Koszul.
Does this family contain a Koszul toric ring with no quadratic Gr\"obner basis?  Find a
criterion in terms of the polytope, a flag unimodular triangulation, or the quadratic
fibers.
\end{problem}

Finally, we ask two minimality questions about the examples found in the classification.

\begin{problem}
Is there a lattice polytope of dimension at most four that has a flag unimodular triangulation but no quadratic triangulation?
\end{problem}

The smallest example found in the census has $13$ vertices and normalized volume $29$.
\begin{problem}
Find a lattice polytope of minimum normalized volume, and one with the minimum number of vertices, that has a flag unimodular triangulation but no quadratic triangulation.
\end{problem}

\section*{\texorpdfstring{Acknowledgments}{Acknowledgments}}
The author is grateful to Jes{\'u}s A. De Loera for extensive advice and many
detailed discussions, and to Christopher Eur for suggesting that {delta-matroid}
polytopes be investigated.  This work was partially supported by NSF grants
DMS-2348578 and DMS-2434665.

\section*{\texorpdfstring{Tool and computational resource disclosure}{Tool and computational resource disclosure}}

The author takes full responsibility for all mathematical statements, references and arguments. The proof of the classification is computer assisted with SAT Solvers. The code will be provided upon request. ChatGPT 5.6 Pro was only used as an auxiliary editorial tool for proofreading and preliminary literature search assistance.

\bibliographystyle{amsalpha}
\bibliography{most_01_polytopes}

\end{document}